\documentclass[12pt, reqno]{amsart}
\usepackage{graphicx}
\usepackage{subfigure}
\usepackage{latexsym}
\usepackage{amsmath}
\usepackage{amssymb}

\usepackage[usenames]{color} 
\usepackage{soul} 
\usepackage{pstricks}

\usepackage{amscd}
\usepackage{color}
 \setlength{\textwidth}{15.0cm}
 \setlength{\textheight}{22.0cm}
 \hoffset=-1cm
\pagestyle {plain}

 \newtheorem{theorem}{Theorem}[section]
 \newtheorem{Def}[theorem]{Definition}
 \newtheorem{Prop}[theorem]{Proposition}
 \newtheorem{Lem}[theorem]{Lemma}
 \newtheorem{Cor}[theorem]{Corollary}

 \newtheorem{Exa}[theorem]{Example}

\def\bd{{\bf d}}
\def\bi{{\bf i}}
\def\bj{{\bf j}}

\def\bu{{\bf u}}
\def\bv{{\bf v}}
\def\pt{\partial}

\newcommand{\D}{{\mathcal D}}
\newcommand{\E}{{\mathcal E}}
\newcommand{\F}{{\mathcal F}}
\newcommand{\Z}{{\mathbb Z}^d}

\date {}

\begin{document}

\title{On the Lipschitz equivalence of self-affine sets}

\author{Jun Jason Luo}
\address{College of Mathematics and Statistics, Chongqing University,  401331 Chongqing, China
\newline\indent Institut f\"ur Mathematik, Friedrich-Schiller-Universit\"at Jena, 07743 Jena, Germany}
\email{jasonluojun@gmail.com}

\subjclass[2010]{Primary 28A80; Secondary 05C05, 20F65}

\keywords{self-affine set, McMullen-Bedford set, Lipschitz equivalence, pseudo-norm, hyperbolic graph,  open set condition}

\thanks{The research is supported in part by the NNSF of China (No.11301322), the Fundamental and Frontier Research Project of Chongqing (No.cstc2015jcyjA00035)}

\begin{abstract}
Let $A$ be an expanding $d\times d$ matrix with integer entries and $\D\subset \Z$ be a finite digit set. Then the pair $(A, \D)$ defines a unique integral self-affine set $K=A^{-1}(K+\D)$. In this paper, by replacing the Euclidean norm with a pseudo-norm $w$ in terms of $A$, we construct a hyperbolic graph on  $(A, \D)$ and show that $K$ can be identified with the hyperbolic boundary. Moreover, if  $(A, \D)$ safisfies  the open set condition, we also prove that two totally disconnected integral self-affine sets are Lipschitz equivalent if an only if  they have the same $w$-Hausdorff dimension, that is, their digit sets have equal cardinality.	We extends some well-known results in the self-similar sets to the self-affine sets.
\end{abstract}

\maketitle

\section{Introduction}

Let $M_d({\mathbb Z})$ denote the class of $d\times d$ matrices with integer  entries and let $A\in M_d({\mathbb Z})$ be expanding, i.e., all its eigenvalues in moduli are strictly bigger than $1$.  Let ${\mathcal D}=\{d_1,\dots, d_N\}\subset \Z$ be a digit set. Define affine maps $S_i(x)=A^{-1}(x+d_i)$ for all $i$. Then $\{S_i\}_{i=1}^N$ forms an {\it iterated function system (IFS)}. There exists a unique nonempty compact set $K$  in ${\mathbb R}^d$ \cite{F} satisfying  
\begin{equation}\label{id-self-affine-set}
K=\bigcup_{i=1}^NS_i(K).
\end{equation}
$K$ is called an (integral) {\it self-affine set}, and a {\it self-similar set} if $A$ is a similar matrix (i.e., $A=nI$ where $n\in {\mathbb N}$ and $I$ is an identity matrix). Usually, we also write it as $$K=A^{-1}(K+{\mathcal D})$$ when we emphasize the affine pair $(A, \D)$. The IFS or the $(A, \D)$ is said to satisfy the {\it open set condition (OSC)} if there exists a bounded nonempty open set $O$ such that $\cup_{i=1}^N S_i(O) \subset O$ and $S_i(O)\cap S_j(O)=\emptyset$ for $i\ne j$.  The  McMullen-Bedford sets (\cite{Be84},\cite{Mc84}) are special cases of such self-affine sets  (see Figure \ref{McMullen sets}).

There are many studies on self-affine sets (see book \cite{F}). Moreover, the related self-affine tiles and tilings are also hot topics in the literature (see the survey paper \cite{Wa99} and references therein).

\begin{figure}[h]
	\centering
	\subfigure[$K$]{
		\includegraphics[width=4cm]{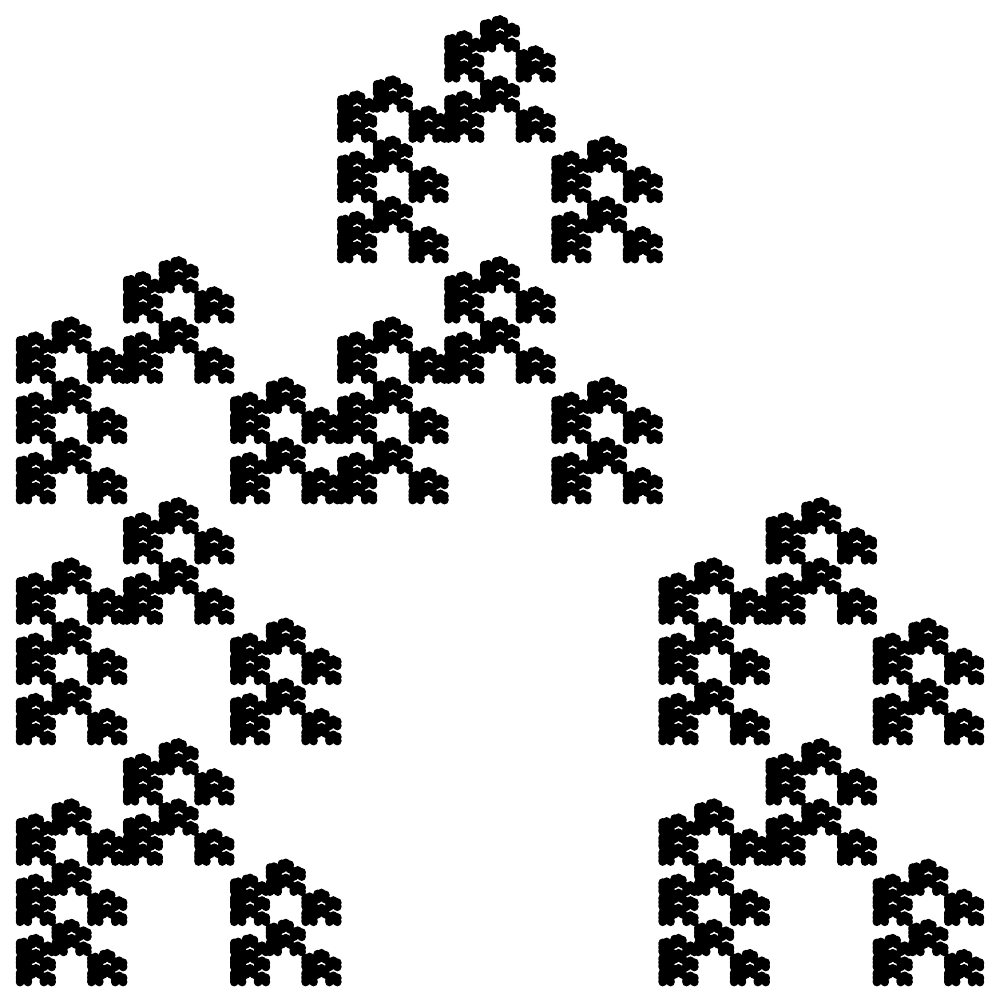}
	}\quad
	\subfigure[$K'$]{
		\includegraphics[width=4cm]{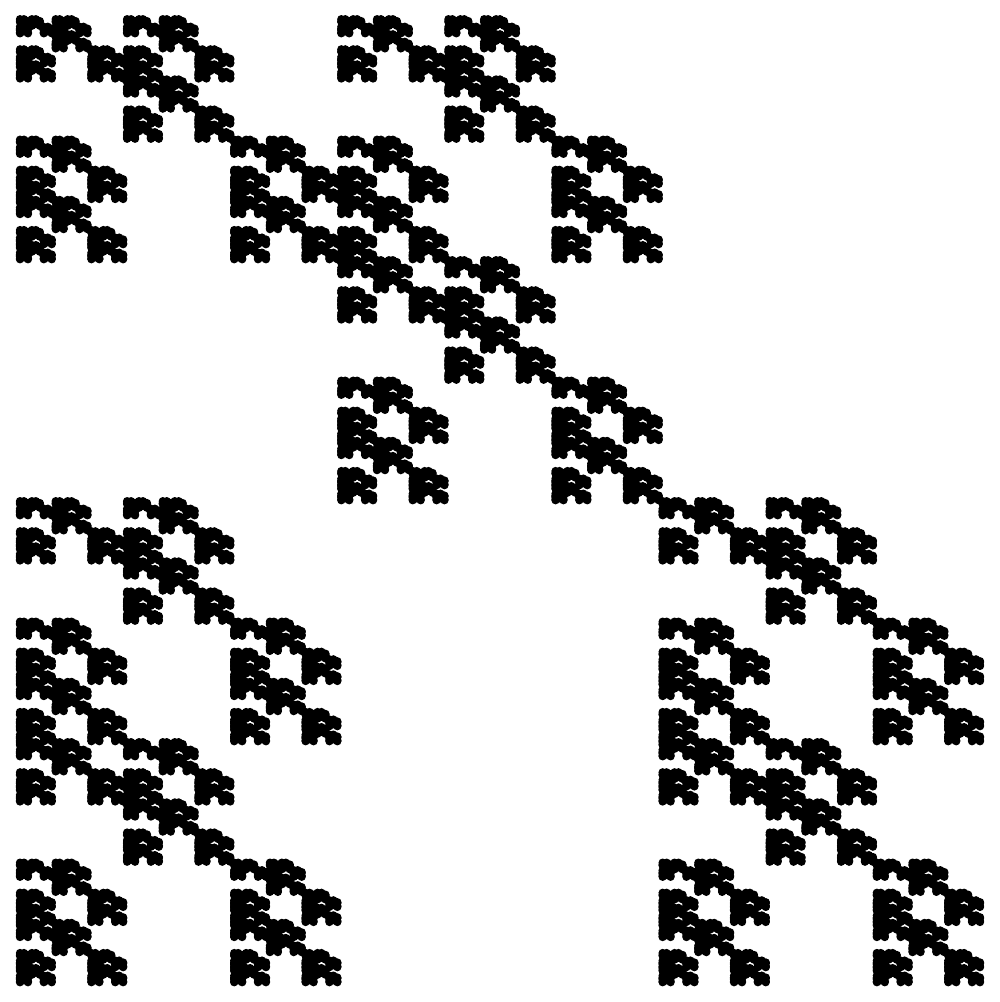}
	} \quad
	\subfigure[$K''$]{
		\includegraphics[width=4cm]{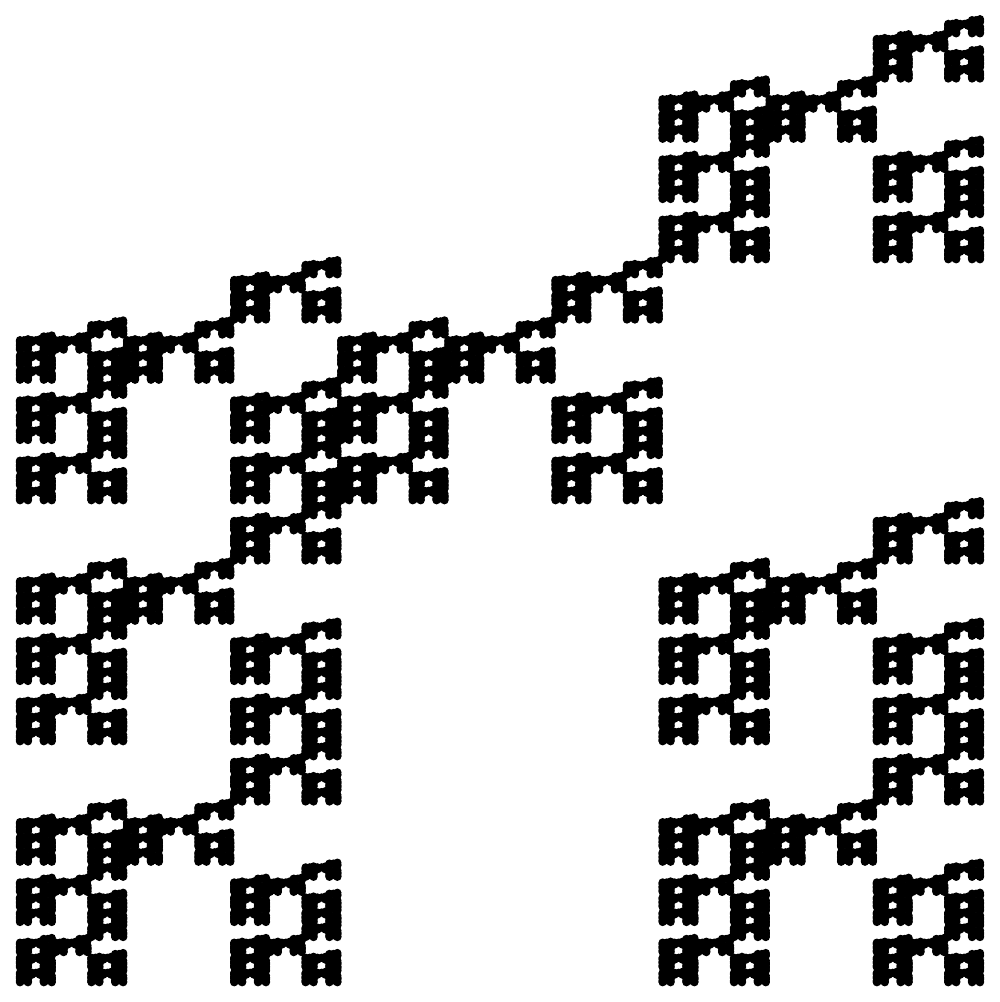}
	}
	\caption{McMullen-Bedford sets with $A=[3,0;0,4]$.}\label{McMullen sets}
\end{figure}

Two metric spaces $(E, d_1)$ and $(F, d_2)$ are said to be {\it Lipschitz equivalent}, denote by $E\simeq F$, if there exists a bi-Lipschitz map $\sigma: E \to F$, i.e., $\sigma$ is a bijection and there exists a constant $C>0$ such that
$$C^{-1}d_1(x,y)\le d_2(\sigma(x), \sigma(y))\le C d_1(x,y), \quad\forall \   x,y\in E.$$
If $E, F\subset {\mathbb R}^d$ and $E\simeq F$, then the above inequality becomes 
\begin{equation}\label{eq-w-equivalence}
C^{-1}\|x-y\|\le \|\sigma(x)- \sigma(y)\|\le C\|x-y\|, \quad\forall \  x,y\in E,
\end{equation}
where $\|\cdot\|$ is the Euclidean norm.  The Hausdorff dimension is an invariant under the bi-Lipschitz map. Like the topological equivalence, Lipschitz equivalence is an important tool for the classification of fractals in fractal geometry and geometric measure theory (\cite{FaMa89},\cite{DaSe97},\cite{Xi04},\cite{LlMa10}). The study of Lipschitz equivalence on Cantor sets  was initiated by Copper and Pignataro \cite{CoPi88} and Falconer and Marsh \cite{FaMa92}. Along this line, it has been undergoing a great development by many people  (\cite{DLL15},\cite{L17},\cite{LaLu13},\cite{RaRuXi06}-\cite{RaZh15},\cite{XiXi10}-\cite{XiXi14}). Up to now, the following is an elegant result on the Lipschitz equivalence of self-similar sets.

\begin{theorem}[\cite{RaRuXi06},\cite{XiXi10},\cite{LaLu13},\cite{XiXi13}]\label{thm0}
	Let $A=nI$ be a similar matrix and  $K=A^{-1}(K+{\mathcal D}_1), K'=A^{-1}(K'+{\mathcal D}_2)$ be two self-similar sets as in \eqref{id-self-affine-set}. If both the IFSs satisfy the OSC and  $K, K'$ are totally disconnected. Then $K\simeq K'$ if and only if $\#{\mathcal D}_1=\#{\mathcal D}_2$.
\end{theorem}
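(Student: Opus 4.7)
The forward direction is standard: Lipschitz equivalence preserves Hausdorff dimension, and under OSC the self-similar set $K = (nI)^{-1}(K + \D_i)$ has $\dim_H K = \log(\#\D_i)/\log n$ by Moran's formula. Equating the dimensions of $K$ and $K'$ yields $\#\D_1 = \#\D_2$.

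For the converse, set $N = \#\D_1 = \#\D_2$. The plan is to exhibit a common symbolic model and show $K$ and $K'$ are each bi-Lipschitz to it. The natural candidate is the symbolic Cantor set $\Sigma_N = \{1, \ldots, N\}^{{\mathbb N}}$ equipped with the visual metric $\rho(\bi, \bj) = n^{-|\bi \wedge \bj|}$, and the bridge is the coding map $\pi : \Sigma_N \to K$, $\pi(\bi) = \bigcap_{k \geq 1} S_{i_1} \cdots S_{i_k}(K)$. The upper Lipschitz estimate $\|\pi(\bi) - \pi(\bj)\| \leq \textup{diam}(K) \cdot n^{-|\bi \wedge \bj|}$ is immediate from the uniform contraction ratio $1/n$. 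The lower bound is the delicate point, because under OSC alone distinct cylinders may touch, so $\pi$ need not be injective and cylinders at level $k$ need not be separated by distance $\gtrsim n^{-k}$.

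To resolve this I would invoke the hyperbolic-graph framework promised in the paper's abstract: augment the word tree $\bigcup_{k \geq 0} \D_i^k$ by horizontal edges joining cylinders at the same level whose Euclidean distance is $\lesssim n^{-k}$, thereby encoding the touching structure explicitly. Under the totally disconnected assumption, the number of horizontal neighbors per vertex is uniformly bounded — this is the classical finiteness-of-neighbor-types phenomenon for self-similar sets satisfying OSC. The augmented graph is then a quasi-tree, and the main identification result of the paper realizes $K$ as its hyperbolic boundary equipped with the visual metric induced by $\|\cdot\|$. Performing the same construction for $K'$ produces two augmented graphs $X_K, X_{K'}$ sharing the underlying $N$-ary tree and differing only by uniformly bounded horizontal decorations, so they are quasi-isometric via a map that is the identity on vertices.

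The principal obstacle is promoting this quasi-isometry $X_K \sim X_{K'}$ to a bi-Lipschitz (rather than merely quasisymmetric) equivalence of the hyperbolic boundaries. This requires verifying that the Gromov products on the two graphs agree up to an additive constant, which in turn rests on the uniform horizontal-degree bound together with a compactness argument controlling how horizontal edges shift the level at which two rays diverge. Once this is in place, the induced boundary map is bi-Lipschitz with respect to the visual metrics, and composing with the identifications $K \leftrightarrow \partial X_K$ and $\partial X_{K'} \leftrightarrow K'$ yields the desired bi-Lipschitz map $\sigma : K \to K'$.
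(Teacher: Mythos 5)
Your forward direction is fine, and you have correctly identified the right framework for the converse (augmented tree, hyperbolic boundary, the H\"older identification of $K$ with $\partial X$). Note that the paper does not prove this theorem --- it is quoted from \cite{RaRuXi06},\cite{XiXi10},\cite{LaLu13},\cite{XiXi13} --- but the author proves the exactly analogous self-affine statement (Theorem \ref{th-ifandonlyif}) with the same machinery you invoke, so the comparison is meaningful.

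The genuine gap is in the step you yourself flag as the ``principal obstacle,'' and your proposed resolution of it does not work. You claim that $X_K$ and $X_{K'}$ are quasi-isometric ``via a map that is the identity on vertices'' and that one can then check the Gromov products agree up to an additive constant. This is false in general: the identity on vertices is a quasi-isometry from the augmented tree $(X,\E)$ to the plain tree $(X,\E_v)$, but quasi-isometries only induce H\"older/quasi-symmetric boundary maps, and the Gromov products genuinely diverge --- two rays whose cylinders touch at every level have bounded Gromov product in the tree but their product in the augmented graph is governed by the level at which the horizontal chain between them finally breaks, and no uniform horizontal-degree bound controls that discrepancy by an additive constant. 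Bounded degree is automatic for integral IFSs (Lemma \ref{lem-WSC}) and is strictly weaker than what is needed. The correct ingredient is that total disconnectedness forces the graph to be \emph{simple}, i.e.\ the horizontal \emph{components} fall into finitely many equivalence classes (Lemma \ref{th-simple}); Theorem \ref{th1.1} of \cite{DLL15} then gives $\partial(X,\E)\simeq\partial(X,\E_v)$, but its proof is an inductive rearrangement that constructs a \emph{new} bijection between the trees, component class by component class --- it is emphatically not the identity on vertices. Without importing that rearrangement argument (or reproving it), your outline does not close; with it, your outline becomes the proof of Theorem \ref{th-ifandonlyif} given in Section 4, specialized to $A=nI$.
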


However, due to the complexity and non-uniform contractility from the matrix A, it is difficult to investigate the geometric and topological properties of self-affine sets.  To our knowledge, there are very few results on the Lipschitz equivalence of  self-affine sets. For example, even if $K, K'$ in Figure \ref{McMullen sets} have the same Hausdorff dimension, it is still not clear whether they are Lipschitz equivalent or not.

In order to absorb the non-uniform contractility from $A$, He and Lau \cite{HeLa08} introduced a concept of pseudo-norm $w$  in terms of $A$ (see Section 2) to replace the Euclidean norm and defined the (generalized) $w$-Hausdorff measure  ${\mathcal H}_w^{s}$,  and $w$-Hausdorff dimension $\dim_H^w$. Moreover, they extended Schief's well-known result on self-similar sets \cite{Sc94} to self-affine sets. 

In this paper, we mainly apply the pseudo-norm approach to make an attempt on the Lipschitz equivalence of  self-affine sets.  For distinction, we call $E, F$ {\it $w$-Lipschitz equivalent}, and denote by $E\simeq_w F$ if we replace $\|\cdot\|$ by $w(\cdot)$ in \eqref{eq-w-equivalence}.

On the other hand, Falconer and Marsh in \cite{FaMa92} proposed a {\it nearly Lipschitz equivalence} between $E$ and $F$, denote by $E \simeq_n F$, in the sense that for any  $0<\eta<1$ there exist a bijective map $\sigma: E\to F$ and $C>0$ such that $$C^{-1}\|x-y\|^{1/\eta}\leq \|\sigma(x)-\sigma(y)\|\leq C\|x-y\|^{\eta}, \quad\forall \   x,y\in E.$$ 
The Hausdorff dimension is also an invariant under nearly Lipschitz equivalence. A  relationship between the two kinds of Lipschitz equivalence is as follows.

\begin{Prop}\label{thm2}
	Suppose the eigenvalues of $A$ have  equal  moduli. If $E\simeq_w F$ then $E\simeq_n F$.
\end{Prop}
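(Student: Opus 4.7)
The plan is to compare the pseudo-norm $w$ with the Euclidean norm on bounded sets and then transfer the $w$-bi-Lipschitz bijection $\sigma:E\to F$ into a nearly Lipschitz one. Under the hypothesis that every eigenvalue of $A$ has the common modulus $\rho$ (which then equals $|\det A|^{1/d}$), the key comparison I would first establish is: for every $\eta\in(0,1)$ there exists $C_\eta>0$ with
\[
C_\eta^{-1}\|z\|^{1/\eta}\le w(z)\le C_\eta\|z\|^{\eta}
\]
for all $z$ in any fixed bounded subset of $\mathbb{R}^d$. The scaling identity $w(Az)=\rho\, w(z)$ reduces the claim to comparing $\rho^n$ with the growth rates of $\|A^{\pm n}\|$. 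Since the largest and smallest eigenvalue moduli of $A$ both equal $\rho$, Gelfand's spectral radius formula gives, for every $\varepsilon>0$, a constant $C(\varepsilon)$ with $\|A^n\|\le C(\varepsilon)(\rho+\varepsilon)^n$ and $\|A^{-n}\|\le C(\varepsilon)(\rho-\varepsilon)^{-n}$ for all $n\ge 0$. For $z$ close to $0$, pick $n\ge 0$ so that $A^n z$ lies in the fixed shell $\{y:1\le w(y)\le\rho\}$, a compact set at positive Euclidean distance from the origin; then $w(z)\asymp\rho^{-n}$, while the spectral bounds squeeze $\|z\|$ into $[C^{-1}(\rho+\varepsilon)^{-n},\,C(\rho-\varepsilon)^{-n}]$. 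Taking logarithms shows $\log w(z)/\log\|z\|$ lies between $\log\rho/\log(\rho+\varepsilon)$ and $\log\rho/\log(\rho-\varepsilon)$; both endpoints tend to $1$ as $\varepsilon\to 0$, so choosing $\varepsilon$ small enough depending on $\eta$ forces this exponent into $[\eta,1/\eta]$, yielding the claimed inequalities.

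Given this comparison, the conclusion follows by a short composition of inequalities. Fix $\eta\in(0,1)$, set $\eta_0=\sqrt{\eta}\in(\eta,1)$, and apply the first step with exponent $\eta_0$ on the compact set $(E-E)\cup(F-F)$, obtaining $C>0$ with $C^{-1}\|u\|^{1/\eta_0}\le w(u)\le C\|u\|^{\eta_0}$. Combined with the $w$-bi-Lipschitz hypothesis $C_0^{-1}w(x-y)\le w(\sigma(x)-\sigma(y))\le C_0\,w(x-y)$, this yields
\[
\|\sigma(x)-\sigma(y)\|\le C\,w(\sigma(x)-\sigma(y))^{\eta_0}\le C\,C_0^{\eta_0}\,w(x-y)^{\eta_0}\le C'\|x-y\|^{\eta_0^2}=C'\|x-y\|^{\eta},
\]
and symmetrically $\|\sigma(x)-\sigma(y)\|\ge C''\|x-y\|^{1/\eta}$, which is precisely $E\simeq_n F$.

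The main difficulty is the first step. A pseudo-norm arising from an expanding matrix with eigenvalues of genuinely different moduli is not comparable to any single power of the Euclidean norm, so the hypothesis must be used sharply. It is precisely the equality of all eigenvalue moduli that makes $\lim_n\|A^n\|^{1/n}=\lim_n\|A^{-n}\|^{-1/n}=\rho$, and this in turn forces $w(z)$ and $\|z\|$ onto the same logarithmic scale up to arbitrarily small error---enough for the near-Lipschitz, though not genuinely Lipschitz, bound. The second step, by contrast, is a routine chain of substitutions whose exponents are tailored by the $\eta_0=\sqrt{\eta}$ choice.
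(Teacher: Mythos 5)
Your proof is correct and follows essentially the same route as the paper's: the paper likewise chains the $w$-bi-Lipschitz inequality for $\sigma$ through the two-sided power comparison between $w$ and the Euclidean norm (its Proposition~\ref{prop-norms}(i), quoted from He--Lau), with the exponents $\ln q/(d\ln(\lambda\mp\epsilon))\to 1$ playing exactly the role of your $\eta_0$ and $1/\eta_0$. The only difference is that you re-derive that comparison from Gelfand's formula and the scaling $w(Az)=q^{1/d}w(z)$ rather than citing it, which also lets you work on an arbitrary bounded set and skip the paper's normalization $|E|,|F|\le 1$.
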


In studying  the Lipschitz equivalence of self-similar sets, the author  developed a technique of  augmented tree (refer to a series of papers \cite{LaLu13},\cite{DLL15},\cite{L17}). An augmented tree is defined on the symbolic space of the self-similar IFS by adding more edges, and it is a Gromov hyperbolic graph (\cite{LaWa09},\cite{LaWa16}). 

Now under the setting of self-affine IFS as in \eqref{id-self-affine-set}, we let $\Sigma =\{1, \dots, N\}$ and $\Sigma^* :=  \bigcup_{n=0}^\infty \Sigma^n$ where $\Sigma^0=\emptyset$. For $\bu=i_1\cdots i_n\in \Sigma^*$, write the length $|\bu|=n$ and the composition $S_\bu=S_{i_1}\circ\cdots \circ S_{i_n}$. Say $\bu, \bv\in X$ are equivalent, denote by $\bu\sim\bv$, if $S_{\bu}=S_{\bv}$. Then $\sim$ determines an equivalence relation on $\Sigma^*$. Set $X_n=\Sigma^n/\sim$  for each $n$. Then $X=\bigcup_{n=0}^\infty X_n$ is the quotient space of $\Sigma^*$ and $[\bu]$ the equivalence class. For convenience, we still use $\bu\in X$ to replace $[\bu]\in X$ with no confusions.

There is a natural graph structure on $X$ by the standard concatenation of words (see details in Section 2), we denote the edge set by $\E_v$.  Let $J$ be a nonempty bounded closed invariant set of the IFS, i.e., $S_i(J) \subset J$ for each $i$.  We define   horizontal edges on the graph $(X, \E_v)$ by 
\begin{equation*}
\E_h=\bigcup_{n=1}^\infty\{(\bu,\bv)\in X_n\times X_n:  \bu\ne \bv, \text{ and } S_\bu(J)\cap S_\bv(J)\ne \emptyset\}.
\end{equation*}
Let  $\E=\E_v\cup \E_h$, then the graph $(X, \E)$ resembles the augmented tree (see Definition \ref{Def'}).

We use the standard notation on hyperbolic graph $X$ introduced by Gromov (\cite{Gr87}, \cite{Wo00}). The hyperbolic boundary is defined by $\partial X=\hat{X}\setminus X$ where $\hat{X}$ is the completion of $X$ under a visual metric $\rho_a$ on $X$ (see Section 2). The following main result is a generalization of the self-similar case (\cite{LaWa09},\cite{Wa14},\cite{LaWa16}).

\begin{theorem}\label{thm1}
	Let $K$ be the integral self-affine set as in \eqref{id-self-affine-set} with $|\det A|=q$. Then the graph $(X, \E)$ is hyperbolic. Moreover, $K$ is H\"older equivalent to the hyperbolic boundary of $(X, \E)$, i.e., there exists a bijective map $\phi: \partial X \to K$ such that 
	\begin{equation*}
	C^{-1}w(\phi(\xi)-\phi(\eta)) \le \rho_a(\xi,\eta)^\alpha \le Cw(\phi(\xi)-\phi(\eta)), \quad \forall \  \xi\ne \eta \in \partial X
	\end{equation*}
	where $\alpha=\frac{\log q}{da}$ and $C>0$ is a constant.
\end{theorem}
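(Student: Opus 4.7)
The plan is to combine two classical ingredients: first prove the graph is Gromov hyperbolic by verifying a bounded-ancestor/finite-type property for horizontal edges, then build the boundary-to-$K$ bijection from the natural coding map and estimate the Gromov product in terms of the pseudo-norm $w$.

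For hyperbolicity, I would follow the standard augmented-tree approach (as in Lau--Wang). The key technical step is to establish the property that there exists an integer $k$ such that whenever $(\bu,\bv)\in\E_h$ with $|\bu|=|\bv|=n$, the ancestors $\bu',\bv'$ at level $n-k$ (if it exists) satisfy either $\bu'=\bv'$ or $(\bu',\bv')\in\E_h$. This is where the pseudo-norm $w$ does the real work: because $w(A^{-1}x)=q^{-1/d}w(x)$, the $w$-diameter of $S_\bu(J)$ is exactly $q^{-n/d}\cdot w\text{-diam}(J)$, so $S_\bu(J)\cap S_\bv(J)\neq\emptyset$ forces the $w$-distance between the translation parts of $S_\bu$ and $S_\bv$ to be $O(q^{-n/d})$. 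Since $J$ is bounded and each $S_\bu(J)$ meets only boundedly many $S_\bv(J)$ at the same level, one obtains a uniform bound on the number of level-$n$ neighbors, and the ancestor condition is a finite-type consequence of this. Once this is in place, hyperbolicity follows from the general criterion for augmented trees: every horizontal path of length $\ell$ at level $n$ can be pushed up at most $k\ell$ steps before collapsing to a point, which is equivalent to the four-point Gromov condition after a routine unwinding.

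For the boundary identification, define $\phi:\partial X\to K$ by $\phi(\xi)=\lim_{n\to\infty}S_{\xi|_n}(x_0)$ for any $x_0\in J$; the limit exists and is independent of $x_0$ because $w(S_{\xi|_n}(x)-S_{\xi|_n}(y))=q^{-n/d}w(x-y)$ contracts geometrically. Surjectivity follows from \eqref{id-self-affine-set} applied iteratively, and injectivity uses that horizontally-connected cylinders overlap on $K$, so distinct boundary points correspond to distinct cylinder chains whose $w$-separation is eventually positive. For the Hölder comparison, the Gromov product $(\xi|\eta)$ in the augmented tree is, up to an additive constant, the largest $n$ such that the level-$n$ ancestors of $\xi$ and $\eta$ are equal or joined by a horizontal edge. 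At that level $n$ we have $w(\phi(\xi)-\phi(\eta))\asymp q^{-n/d}$: the upper bound comes from the $w$-diameter estimate of the two neighboring cylinders, and the lower bound from the failure of horizontal adjacency at level $n+1$, combined with the finite-type property. Since $\rho_a(\xi,\eta)=e^{-a(\xi|\eta)}$ up to multiplicative constants, raising to the power $\alpha=\frac{\log q}{da}$ yields $\rho_a(\xi,\eta)^\alpha\asymp q^{-n/d}\asymp w(\phi(\xi)-\phi(\eta))$, which is the desired two-sided Hölder estimate.

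The main obstacle is the hyperbolicity step, specifically the uniform ancestor/finite-type bound. In the self-similar case of Lau--Wang this is automatic from the isotropic contraction ratio, but here the anisotropy of $A$ has to be absorbed by $w$; one must check carefully that the He--Lau pseudo-norm interacts correctly with horizontal adjacency, i.e., that ``$S_\bu(J)\cap S_\bv(J)\neq\emptyset$'' is a condition controllable purely by $w$-distance between translation vectors rather than by the Euclidean geometry of $J$. Once this compatibility is established, the rest of the argument is a faithful transcription of the self-similar proof with $|\cdot|$ replaced by $w(\cdot)$ and contraction ratio $n^{-1}$ replaced by $q^{-1/d}$.
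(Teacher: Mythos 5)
Your overall architecture matches the paper's: bounded degree from discreteness of the digit translates, hyperbolicity via the criterion that horizontal geodesics are uniformly bounded, the coding map $\phi(\xi)=\lim_{n}S_{{\mathbf u}_n}(x_0)$, and the two-sided comparison $\rho_a(\xi,\eta)^\alpha\asymp q^{-n/d}\asymp w(\phi(\xi)-\phi(\eta))$ at the level $n$ of the horizontal part of the canonical bilateral geodesic. But the two steps you treat as routine are exactly where the content lies, and as written they are gaps. The property you call the ``key technical step'' --- that ancestors of horizontally adjacent vertices are equal or horizontally adjacent --- is automatic from $J_\bu\subset J_{\bu^-}$ and is already built into Definition \ref{Def'}; it does not yield hyperbolicity. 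What must be proved is that horizontal \emph{geodesics} have uniformly bounded length (Lemma \ref{lem-criterion of hyperbolicity}), and your assertion that ``every horizontal path of length $\ell$ can be pushed up at most $k\ell$ steps before collapsing'' is neither precise nor a consequence of bounded degree. The paper's argument is a counting one: a horizontal geodesic $[\bu_0,\dots,\bu_{3m}]$ at level $n$ has $m$-th generation ancestors forming a horizontal path of length $\ge m$, while $\bigcup_i J_{\bu_i}$ has $w$-diameter at most $(3m+1)q^{-n/d}\,\textup{diam}_w(J)\le q^{(m-n)/d}\textup{diam}_w(J)$; rescaling by $A^{n-m}$ then produces more than $m$ distinct integer translates $J+d_\bv$ meeting a set of $w$-diameter $\le \textup{diam}_w(J)$, contradicting the uniform discreteness of $\{d_\bv\}\subset\Z$ (Lemma \ref{lem-WSC}) once $m$ is large. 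Some quantitative step of this kind is indispensable.

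The second gap is in your lower H\"older bound: you claim that failure of horizontal adjacency at level $n+1$ forces $w(\phi(\xi)-\phi(\eta))\ge cq^{-n/d}$ with a uniform $c$, but you give no mechanism for the uniformity. This is Lemma \ref{h-condition}, and it is precisely where integrality of $\D$ enters: since $d_\bu-d_\bv\in\Z$, the quantity $c'=\inf\{w(x):x\in K-K+d,\ d\in\Z,\ 0\notin K-K+d\}$ is a positive constant independent of the level (by compactness of $K$ and discreteness of $\Z$), and one then passes from $K$ to the invariant set $J$ by descending $k$ further levels so that $d_H^w(K_{\bf i},J_{\bf i})<c'/3$ --- hence the appearance of the offsets ${\bf i},{\bf j}\in\Sigma^k$ in the separation estimate. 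Without this discreteness argument the separation constant could degenerate with $n$ and the inequality $w(\phi(\xi)-\phi(\eta))\ge c\,\rho_a(\xi,\eta)^\alpha$ would not follow. The remainder of your outline (well-definedness and bijectivity of $\phi$, the upper bound via $(\ell+2)q^{-n/d}\textup{diam}_w(J)$ together with the quasi-triangle inequality $w(x+y)\le\beta\max\{w(x),w(y)\}$) is faithful to the paper's proof.
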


The theorem together with Theorem 1.1 of \cite{DLL15} (see Theorem \ref{th1.1}) helps us extend Theorem \ref{thm0} to the framework of self-affine sets.

\begin{theorem}\label{thm3}
	Let $K=A^{-1}(K+{\mathcal D}_1)$ and $K'=A^{-1}(K'+{\mathcal D}_2)$ be two  integral self-affine sets. Suppose both the IFSs satisfy the OSC and $K, K'$ are totally disconnected. Then $K\simeq_w K'$ if and only if $\#{\mathcal D}_1=\#{\mathcal D}_2$.
\end{theorem}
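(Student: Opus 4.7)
The plan is to treat the two implications separately, using Theorem \ref{thm1} to translate the geometric question on the self-affine sets into a combinatorial question on the hyperbolic boundaries of their augmented graphs $(X,\E)$ and $(X',\E')$. For the forward implication, suppose $K \simeq_w K'$. Since a $w$-Lipschitz bijection preserves the $w$-Hausdorff dimension, $\dim_H^w K = \dim_H^w K'$. Under the OSC, the He--Lau self-affine analogue of Schief's theorem expresses $\dim_H^w K$ as a strictly increasing function of $\#\D$ (with $|\det A|=q$ fixed), so equality of dimensions forces $\#\D_1 = \#\D_2$.

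For the sufficiency direction, set $N := \#\D_1 = \#\D_2$ and construct the augmented graphs $(X,\E)$ and $(X',\E')$ as in Section 2. By Theorem \ref{thm1} both are hyperbolic, with bi-Hölder homeomorphisms $\phi:\partial X\to K$ and $\phi':\partial X'\to K'$ sharing the same exponent $\alpha = \frac{\log q}{da}$ (since the matrix $A$, and hence $q$, is common). Next I would invoke Theorem \ref{th1.1}, which characterizes bi-Lipschitz equivalence of hyperbolic boundaries of augmented trees. Its hypotheses, namely the OSC, total disconnectedness of the attractors, and equal branching number $N$, are exactly what we now have, so it produces a bi-Lipschitz homeomorphism $\psi:(\partial X,\rho_a)\to (\partial X',\rho_a)$. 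Composition gives the candidate map $\Phi := \phi'\circ\psi\circ\phi^{-1}:K\to K'$, and chaining the two bi-Hölder estimates for $\phi,\phi'$ (with the same exponent $\alpha$) through the bi-Lipschitz estimate for $\psi$ cancels the visual metric, yielding a constant $C>0$ with
\[
C^{-1}\, w(x-y)\ \le\ w\bigl(\Phi(x)-\Phi(y)\bigr)\ \le\ C\, w(x-y), \qquad x,y\in K.
\]
Hence $K \simeq_w K'$, as desired.

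The main obstacle is the interface between our self-affine augmented graph and Theorem \ref{th1.1}, which in \cite{DLL15} is formulated in the self-similar setting. Two points need care. First, in the self-affine case the equivalence $\bu \sim \bv$ whenever $S_\bu = S_\bv$ can collapse distinct words of the same length, so $X_n = \Sigma^n/\sim$ need not be the full $n$-fold product; one must verify that vertex degrees, branching counts and the bounded-degree condition on $(X,\E)$ survive this quotient and match across $X$ and $X'$ (this is where $\#\D_1=\#\D_2$ together with the OSC is genuinely used, not just the cardinality alone). Second, the horizontal edges $\E_h$ depend on an auxiliary invariant set $J$ and on the non-uniform contractility of $A$; the OSC combined with a standard covering/counting argument against $J$ bounds the number of $\bv$ with $S_\bv(J)\cap S_\bu(J)\ne\emptyset$ uniformly in $\bu$, so $\E_h$-components have bounded cardinality, and total disconnectedness of $K,K'$ prevents these components from accumulating on the boundary. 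Once these two verifications are in hand, Theorem \ref{th1.1} applies and the composition above completes the proof.
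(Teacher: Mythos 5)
Your overall architecture is exactly the paper's: necessity via invariance of the $w$-Hausdorff dimension under $\simeq_w$ plus the He--Lau dimension formula $\dim_H^w K = d\ln N/\ln q$ under the OSC; sufficiency by passing to the augmented trees, invoking Theorem \ref{th1.1} to identify both hyperbolic boundaries with the $N$-ary Cantor set $\partial(X,\E_v)$, and conjugating the resulting bi-Lipschitz map by the two H\"older bijections of Theorem \ref{thm1} so that the visual-metric exponents cancel. The forward direction and the final composition argument are fine as you state them.

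The genuine gap is in how you propose to satisfy the hypothesis of Theorem \ref{th1.1}. That theorem requires the augmented tree to be \emph{simple} in the sense of Definition \ref{def of simple tree}: the family of horizontal components has only finitely many equivalence classes under the affine identification $T\sim T'$. You instead verify bounded degree (via a counting argument against $J$) and assert that total disconnectedness gives horizontal components of bounded cardinality. Both of these are true, but neither implies simplicity: infinitely many pairwise inequivalent components of bounded size are a priori possible, and then Theorem \ref{th1.1} does not apply. The paper closes exactly this gap with Lemma \ref{th-simple} (totally disconnected $\Leftrightarrow$ simple), whose proof is the main technical content of the sufficiency direction. It rests on Lemma \ref{lem-constant} --- a compactness argument in the Hausdorff metric showing there is an $n_0$ such that every connected component of $\mathbb{H}_{n_0}=J_{n_0}+{\mathbb Z}^d$ meeting $\overline{B_\delta}$ is contained in $B_{\delta+1}$ --- combined crucially with the \emph{integrality} of the digit sets: after rescaling, each horizontal component $J_T$ becomes a union of at most $k$ integer translates $J+d_{\bu_j}-A^{n_0}\bd$ sitting inside the fixed bounded region $A^{n_0}B_{\delta+1}$, and there are only finitely many such configurations up to translation by ${\mathbb Z}^d$. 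Without this step (or an equivalent one) your appeal to Theorem \ref{th1.1} is unjustified, so you should add it; with it, your proof coincides with the paper's.
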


As a corollary, if we further assume that the eigenvalues of $A$ have  equal  moduli then $K, K'$ are nearly Lipschitz equivalent if and only if $\#{\mathcal D}_1=\#{\mathcal D}_2$ (Corollary \ref{cor-iff}). Moreover, we shall show in Example \ref{exa} that the  McMullen-Bedford sets in Figure \ref{McMullen sets} are $w$-Lipschitz equivalent.

In general, the IFS in \eqref{id-self-affine-set} has overlaps. It actually satisfies the weak separation condition (\cite{LaNgRa01}) which is weaker than the OSC. Hence if we remove the OSC in the assumption, then the theorem would be false. As for the overlapping case,  we need more deep discussions  (see \cite{L17}).

For the organization of the paper, we recall basic results on hyperbolic graph and pseudo-norm in Section 2. In Section 3, we construct a hyperbolic graph  on the affine pair $(A, \D)$ and prove Theorem \ref{thm1}. In Section 4, we first prove Proposition \ref{thm2}, and then prove  Theorem \ref{thm3} by some technical lemmas.

\bigskip
\section{Hyperbolic graph and pseudo-norm}
Let $X$ be an infinite connected graph. For $x, y \in X$, let  $\pi(x, y)$ denote a geodesic from $x$ to $y$, and $d(x,y)$ its length. Fix a vertex $o$ as a reference point of $X$, and let $|x| = d(o, x)$. The degree of a vertex $x$ is the total number of edges connecting to $x$ and is denoted by $\deg(x)$. A graph is said to be of {\it bounded degree} if $\max\{\deg(x): x\in X\}<\infty$.  According to \cite{Wo00}, for $x, y \in X$, let
\begin{equation*}
	|x\wedge y|=\frac12\big(|x|+|y|-d(x,y)\big)
\end{equation*}
denote the {\it Gromov product}, and  call $X$  {\it hyperbolic}  if there is $\delta \geq 0$ such that
$$
|x\wedge y|\ge \min\{|x\wedge z|,|z\wedge y|\}-\delta \quad \forall \  x,y,z\in X.
$$
For $a>0$ with $\exp(3\delta a) < \sqrt 2$, we define a binary map on $X$ by
\begin{equation*}
	\rho_a(x,y)=\delta_{x,y}\exp(-a|x\wedge y|),
\end{equation*}
where $\delta_{x,y}=0,1$ according to $x=y$ or $x\ne y$. The $\rho_a$ is not necessarily a metric, but it is equivalent to a metric (\cite{Wo00}). Hence we always regard $\rho_a$ as a {\it visual metric} for convenience. Let $\hat{X}$ be the $\rho_a$-completion of $X$. We call $\pt X= \hat{X}\setminus X$ the {\it hyperbolic boundary} of $X$.  It is clear that $\rho_a$ can be extended to $\pt X$,  and  $\pt X$ is a compact set under $\rho_a$. It is useful to identify $\xi\in\pt X$ with a {\it geodesic ray} in $X$ that converges to $\xi$.

Let $X$ be a tree  (i.e., any two distinct vertices can be connected by only one path). Trivially, $X$ is hyperbolic (with $\delta =0$), and the hyperbolic boundary is a Cantor set. We use $\E_v$ to denote the set of edges of $X$ ($v$ for vertical), and $X_n = \{x \in X: |x| =n\}$ the $n$-th level of $X$. We introduce some additional edges on each level of $X$.

\begin{Def}[\cite{Ka03},\cite{LaWa09}]  \label{Def'} 
	Let $(X, \E_v)$ be a tree. We call $(X, \E)$ an augmented tree if $\E=\E_v\cup \E_h$,  where $\E_h\subset (X\times X)\setminus \{(x,x):\ x\in X\}$ is symmetric and satisfies
	\begin{equation*}
		(x,y)\in \E_h\Rightarrow |x|=|y|, \mbox{ and either } x^{-}=y^{-} \mbox{ or } (x^{-},y^{-})\in \E_h.
	\end{equation*}
	($x^{-}$ is the {\it predecessor} of $x$.) We call elements in $\E_h$ horizontal edges.
	
	Furthermore, if each vertex of $X$ has $N$ offspring, we call  $(X, \E_v)$ an $N$-ary tree and   $(X, \E)$ an $N$-ary augmented tree.
\end{Def}

For $x, y \in X$, the geodesic path of $x, y$ is not unique in general, but there is a canonical one of the form
\begin{equation*}
	\pi (x, y) = \pi(x, u) \cup \pi (u,v) \cup \pi (v,y)
\end{equation*}
where $\pi (x, u), \pi (v, y)$ are vertical paths, $\pi (u, v)$ is a horizontal path,  and for any geodesic $\pi' (x,y)$, $d(o, \pi (u,v)) \leq d(o, \pi' (x,y))$.  It can happen that there are only two parts with $v=y$ or $x = u$.  For a canonical geodesic $\pi(x,y)$, the  Gromov product can be written as  $$|x\wedge y|=h-\ell/2$$ where $h, \ell$ are the level and the length of the horizontal part $\pi(u,v)$, respectively.

Let  $A\in M_d({\mathbb Z})$ be  expanding  with $|\det A|=q$. Let $B_\delta:=B(0,\delta)$ be the open ball in ${\mathbb R}^d$ with center at $0$ and radius $\delta$. Following \cite {HeLa08}, we introduce the notion of pseudo-norm (with respect to $A$) as follows: For $0<\delta< 1/2$, let $\varphi \geq 0 $ be a $C^\infty$ function supported in   $B_\delta$  with $\varphi (x)
=\varphi (-x)$ and $\int_{{\mathbb R}^d} \varphi =1$. Let $V=AB_1\setminus B_1$, and let $h = \chi_V\ast \varphi$ be the convolution of the indicator function $\chi_V$ and  $\varphi$. We define
$$
w(x) = \sum_{n=-\infty}^\infty q^{-n/d}h(A^nx), \quad x \in {\mathbb R}^d.
$$ 
Then $w(x)$ satisfies  
\begin{enumerate}
	\item $w(x) = w(-x)$ and $w(x) =0$ if and only if $x=0$, 
	
	\item $w(Ax) = q^{1/d}w(x)$, and 
	
	\item there exists $\beta>0$ such that $w(x+y) \leq \beta \max \{w(x), w(y)\}$. 
\end{enumerate}
The $w$ is used as a distance (ultra-metric) to replace the Euclidean
distance to define $\hbox {diam}_w(E)=\sup\{w(x-y):x,y\in E\}$ (the diameter of a set $E$) and $\hbox{dist}_w(E,F)=\inf\{w(x-y): x\in E, y\in F\}$ (the distance between sets $E$ and $F$). Moreover, the  $w$-Hausdorff distance $d^w_H$,  $w$-Hausdorff measure ${\mathcal H}_w^{s}$,  and  $w$-Hausdorff dimension $\dim_H^w$ are also well-defined accordingly.

The new and old definitions of norm and dimension have a simple relationship through $\lambda_1$ and $\lambda_0$, the maximal and minimal moduli of the eigenvalues of $A$.

\begin{Prop}[\cite{HeLa08}]\label{prop-norms}
	(i) If $\|x\|\le 1$. Then for any $0<\epsilon<\lambda_0-1$, there exists a constant $C_\epsilon>0$ such that  
	\begin{align*}
	C_\epsilon^{-1}\|x\|^{\ln q/{d\ln(\lambda_0-\epsilon)}}\le w(x) \le C_\epsilon\|x\|^{\ln q/{d\ln(\lambda_1+\epsilon)}}.
	\end{align*}
	
	(ii) For any subset $E$ of ${\mathbb R}^d$, we have $$\frac{\ln q}{d\ln \lambda_1}\dim_H^w E\le \dim_H E\le\frac{\ln q}{d\ln \lambda_0}\dim_H^w E.$$
\end{Prop}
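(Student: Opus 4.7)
The plan is to exploit the homogeneity $w(Ax) = q^{1/d}w(x)$ together with standard Jordan-form estimates on the iterates of $A$ to trade Euclidean estimates for $w$-estimates, and then to pass from (i) to (ii) via the well-known H\"older-exponent versus Hausdorff-dimension inequality.

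First I would fix a reference annulus $\Omega = \{y \in {\mathbb R}^d : r_1 \le \|y\| \le r_2\}$ on which $w$ is continuous and strictly positive, say $m_\Omega \le w(y) \le M_\Omega$. Continuity and positivity follow because the series $w(y) = \sum_n q^{-n/d}h(A^n y)$ reduces to a finite sum at each $y \ne 0$ and because $h = \chi_V \ast \varphi$ is a nonnegative bump with $\int h > 0$ supported near $V = AB_1 \setminus B_1$. For nonzero $x$ with $\|x\| \le 1$, I would pick the (essentially unique) integer $n = n(x) \ge 0$ with $A^n x \in \Omega$, choosing $r_2/r_1$ large enough that such $n$ always exists. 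The standard spectral bound coming from the Jordan decomposition of $A$ (absorbing polynomial factors into the exponential),
\begin{equation*}
C_\epsilon^{-1}(\lambda_0 - \epsilon)^n \|y\| \le \|A^n y\| \le C_\epsilon (\lambda_1 + \epsilon)^n \|y\|, \qquad n \ge 0,
\end{equation*}
then pins $n$ between
\begin{equation*}
\frac{-\log\|x\|}{\log(\lambda_1 + \epsilon)} - O(1) \le n \le \frac{-\log\|x\|}{\log(\lambda_0 - \epsilon)} + O(1).
\end{equation*}
Since $w(x) = q^{-n/d} w(A^n x)$ with $w(A^n x) \in [m_\Omega, M_\Omega]$, exponentiating the two bounds on $n$ in base $q^{-1/d}$ delivers precisely the two inequalities of (i).

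For (ii), I would feed (i) into the standard fact that a map $f$ with $d_F(f(x), f(y)) \le C\, d_E(x, y)^\alpha$ satisfies $\dim_H f(E) \le \alpha^{-1}\dim_H E$. The upper bound $w(x) \le C_\epsilon \|x\|^{\log q/(d\log(\lambda_1 + \epsilon))}$ says that the identity $(E, \|\cdot\|) \to (E, w)$ is H\"older of exponent $\log q/(d\log(\lambda_1 + \epsilon))$, hence $\dim_H E \ge \tfrac{\log q}{d\log(\lambda_1 + \epsilon)} \dim_H^w E$; inverting the lower bound $w(x) \ge C_\epsilon^{-1} \|x\|^{\log q/(d\log(\lambda_0 - \epsilon))}$ gives a H\"older identity in the reverse direction and thus $\dim_H E \le \tfrac{\log q}{d\log(\lambda_0 - \epsilon)} \dim_H^w E$. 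Letting $\epsilon \to 0^+$ collapses both to the stated inequalities in (ii).

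The main obstacle I anticipate is the positivity $w > 0$ on $\Omega$ (and more generally on ${\mathbb R}^d \setminus \{0\}$): because $h$ is supported only in a $\delta$-neighborhood of the single annulus $V$, one must verify that for each $y \in \Omega$ at least one summand $h(A^n y)$ in the defining series is strictly positive. This reduces to checking that the dilates $\{A^n V\}_{n \in {\mathbb Z}}$ cover a punctured neighborhood of the origin, which is where the choice of $\delta$ and the expanding nature of $A$ (so that adjacent dilates $A^{n}V, A^{n+1}V$ overlap rather than leave gaps) must be used. Once this positivity is established, the remainder of the argument is routine scaling and standard Hausdorff-dimension bookkeeping.
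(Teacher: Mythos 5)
Your argument is correct, and it is essentially the standard proof of this result: the paper itself gives no proof but imports the proposition from \cite{HeLa08}, where the same strategy is used --- renormalize $x$ by the self-affine scaling $w(A^nx)=q^{n/d}w(x)$ into a fixed annulus on which $w$ is continuous and positive, control the renormalization index $n$ by the Jordan-form bounds $(\lambda_0-\epsilon)^n\lesssim\|A^n\|\lesssim(\lambda_1+\epsilon)^n$, and then pass to dimensions via the H\"older mapping principle. The one point you flag as a potential obstacle, positivity of $w$ off the origin, is already listed in Section~2 as property (1) of the pseudo-norm, so nothing essential is missing.
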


Under the  pseudo-norm, most of the basic properties for the
self-similar sets (including Schief's basic result on the OSC) can be
carried to the self-affine sets and graph-directed sets (see \cite {HeLa08} and \cite{LuYa10}). 

\begin{theorem}[\cite{HeLa08}]\label{th-dimformular}
	Let $A$ be an expanding matrix with $|\det A|=q$ and ${\mathcal D}=\{d_1, \dots, d_N\}\subset {\mathbb R}^d$. Let  $K=A^{-1}(K+{\mathcal D})$ be the associated self-affine set. If $(A, \D)$ satisfies the OSC, then $\dim^w_H K=d\ln N/\ln q:=s$ and $0<{\mathcal H}_w^{s}(K)<\infty$.
\end{theorem}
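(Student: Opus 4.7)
The cornerstone observation is that each map $S_i(x)=A^{-1}(x+d_i)$ becomes a genuine similarity when distances are measured by $w$: from property (2) of the pseudo-norm, $w(S_i(x)-S_i(y))=w(A^{-1}(x-y))=q^{-1/d}w(x-y)$, so each $S_i$ contracts $w$-distances by the uniform factor $r:=q^{-1/d}$. Consequently, the analog of the Moran equation $Nr^s=1$ yields the candidate value $s=d\ln N/\ln q$. The plan is then to transplant Schief's classical proof for self-similar sets, with the Euclidean norm replaced by $w$ throughout, and with Lebesgue measure used as an auxiliary tool through the scaling $\mathrm{Leb}(A^{-1}E)=q^{-1}\mathrm{Leb}(E)$ which is perfectly compatible with $w$.

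For the upper bound $\mathcal{H}_w^s(K)<\infty$, I would iterate \eqref{id-self-affine-set} to get $K=\bigcup_{\bu\in\Sigma^n}S_\bu(K)$. Since $S_\bu(x)-S_\bu(y)=A^{-n}(x-y)$, we obtain $\mathrm{diam}_w(S_\bu(K))=q^{-n/d}\,\mathrm{diam}_w(K)$. With $\delta=q^{-n/d}\mathrm{diam}_w(K)$ one gets a $\delta$-cover of $K$ whose $s$-sum is
\begin{equation*}
\sum_{\bu\in\Sigma^n}(\mathrm{diam}_w(S_\bu(K)))^s=N^n\cdot q^{-ns/d}\cdot(\mathrm{diam}_w(K))^s=(\mathrm{diam}_w(K))^s,
\end{equation*}
since $N\cdot q^{-s/d}=1$ by the choice of $s$. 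Letting $n\to\infty$ gives $\mathcal{H}_w^s(K)\le(\mathrm{diam}_w(K))^s<\infty$, and in particular $\dim_H^w K\le s$.

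For the lower bound, I would follow Schief's strategy. Given a bounded open set $O$ from the OSC, I enlarge it to its $w$-closure's neighborhood and use the family of ``stopping words'' $\Lambda(r)=\{\bu:q^{-|\bu|/d}\le r<q^{-(|\bu|-1)/d}\}$. The key bounded-multiplicity lemma asserts that for any $w$-ball $B_w(x,r)$, the number of $\bu\in\Lambda(r)$ with $S_\bu(O)\cap B_w(x,r)\ne\emptyset$ is bounded by a constant $M$ independent of $x,r$. This is proved via a volume argument: the sets $\{S_\bu(O)\}_{\bu\in\Lambda(r)}$ are pairwise disjoint by the OSC, each has Lebesgue measure $q^{-|\bu|}\mathrm{Leb}(O)\asymp r^d$, and they all lie in a dilated $w$-ball $B_w(x,\beta r)$ whose Lebesgue measure, by $A$-homogeneity of $w$, is comparable to $r^d$. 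Combining this multiplicity bound with the standard Hausdorff measure machinery (distribute a Frostman-type mass on $K$ coming from equal weights on cylinders) forces $\mathcal{H}_w^s(K\cap B_w(x,r))\ge c\,r^s$, so $\mathcal{H}_w^s(K)>0$ and $\dim_H^w K\ge s$.

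The main technical obstacle is establishing the Lebesgue estimates for $w$-balls, specifically the two-sided comparability $\mathrm{Leb}(B_w(x,r))\asymp r^d$. This does not follow from a uniform bi-Lipschitz comparison between $w$ and $\|\cdot\|$ (Proposition \ref{prop-norms} only gives H\"older-type bounds when the eigenvalues have unequal moduli), so it must be extracted directly from the definition of $w$ together with property (2): writing $r=q^{-n/d}t$ with $t\in[1,q^{1/d})$, the ball $B_w(x,r)$ is the $A^{-n}$-image of $B_w(A^n x,t)$, whose Lebesgue measure is uniformly bounded above and below since $w$ is continuous, positive off $0$, and the unit $w$-ball is compact. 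Once this estimate is in hand, the OSC-based disjointness argument transcribes verbatim from the self-similar case.
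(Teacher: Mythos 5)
Your proposal is correct: the paper states this theorem without proof (it is quoted from \cite{HeLa08}), and your argument --- the Moran-type cover $\{S_{\bu}(K)\}_{\bu\in\Sigma^n}$ with $\hbox{diam}_w(S_\bu(K))=q^{-n/d}\hbox{diam}_w(K)$ for the upper bound, and Schief's disjointness/volume-counting argument for the lower bound, carried out in the pseudo-norm via the key estimate $\textup{Leb}(B_w(x,r))\asymp r^d$ extracted from the homogeneity $w(Ax)=q^{1/d}w(x)$ --- is essentially the proof given by He and Lau. The only step worth making explicit is that converting the multiplicity bound into $\mu(B_w(x,r))\le Cr^s$ for the equal-weight measure requires $K\subset\overline{O}$, which follows from $\bigcup_{i}S_i(O)\subset O$ and the uniqueness of the attractor.
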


\bigskip
\section{Hyperbolic graph induced by $(A, \D)$}

Let the IFS $\{S_i\}_{i=1}^N$ and the self-affine set $K$ be as in \eqref{id-self-affine-set}. Now we construct a graph structure on the symbolic space that represents the IFS.  Let $\Sigma =\{1, \dots, N\}$ and $\Sigma^* :=  \bigcup_{n=0}^\infty \Sigma^n$ be the symbolic space where $\Sigma^0=\emptyset$ (as a reference point). For $\bu=i_1\cdots i_n\in \Sigma^*$, write the length $|\bu|=n$ and the composition $S_\bu=S_{i_1}\circ\cdots \circ S_{i_n}$. Say $\bu, \bv\in X$ are equivalent, denote by $\bu\sim\bv$, if $S_{\bu}=S_{\bv}$. Then $\sim$ defines an equivalence relation on $\Sigma^*$. Let $X_n=\Sigma^n/\sim$ for each $n$. Then $X=\bigcup_{n=0}^\infty X_n$ is the quotient space of $\Sigma^*$ and $[\bu]$ is the equivalence class containing $\bu$. For convenience, we still use $\bu\in X$ to replace $[\bu]\in X$ with no confusions.

There is a natural graph structure on $X$ by the standard concatenation of finite words, we denote the edge set by $\E_v$. That is, $(\bu, \bv)\in \E_v$ if and only if there exist $\bi\in [\bu], \bj\in [\bv]$ and some $\ell\in \Sigma$ such that $\bj=\bi\ell$ or $\bi=\bj\ell$.  We notice that if the IFS satisfies the OSC, then $S_{\bu}\ne S_\bv$ for any distinct $\bu, \bv$. Hence $X=\Sigma^*$ and $(X,\E_v)$ is an $N$-ary tree.

Let $J$ be a  closed invariant set of the IFS, i.e., $S_i(J) \subset J$ for all $i$. For  $\bu=i_1\cdots i_n\in \Sigma^*$, we let  $J_{\bu}:=S_{\bu}(J)=A^{-n}(J+d_{\bu})$ where $d_{\bu}=d_{i_n}+Ad_{i_{n-1}}+\cdots+A^{n-1}d_{i_1}$. According to the geometry of $K$, we define more (horizontal) edges on $X$:
\begin{equation*}
\E_h=\bigcup_{n=1}^\infty\{(\bu,\bv)\in X_n\times X_n:  \bu\ne \bv, \text{ and } J_\bu\cap J_\bv\ne \emptyset\}.
\end{equation*}

Let $\E=\E_v\cup \E_h$, then the graph $(X, \E)$ resembles the augmented tree in Definition \ref{Def'} by the observation: As $J_\bu\subset J_{\bu^-}, J_\bv\subset J_{\bv^-}$, if $(\bu,\bv)\in \E_h$ then either $\bu^-=\bv^-$ or $(\bu^-,\bv^-)\in \E_h$.

\begin{Lem}[\cite{Wa14}] \label{lem-criterion of hyperbolicity}
	The graph $(X, \E)$ is hyperbolic if and only if  the lengths of  horizontal geodesics are uniformly bounded.
\end{Lem}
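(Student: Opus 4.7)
The plan is to prove the two directions separately. The forward implication is a short midpoint argument, while the converse requires a case analysis based on the canonical geodesic decomposition recalled in Section 2.

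For the necessity, assume $(X, \E)$ is $\delta$-hyperbolic and let $\pi$ be a horizontal geodesic of length $\ell$ from $x$ to $y$, with endpoints at some common level $n$. Let $m$ be the vertex on $\pi$ at distance $\lfloor \ell/2 \rfloor$ from $x$, so $|m|=n$ and $d(x,m),\, d(m,y) \le \lceil \ell/2 \rceil$. From the definition $|u \wedge v| = (|u|+|v|-d(u,v))/2$ one finds
\[
|x\wedge y| = n-\ell/2, \qquad \min\{|x\wedge m|,\, |y\wedge m|\} \ge n - \ell/4 - 1/2.
\]
Plugging these into the four-point inequality $|x\wedge y| \ge \min\{|x\wedge m|, |y\wedge m|\} - \delta$ yields $\ell \le 4\delta + O(1)$, which is the desired uniform bound.

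For the sufficiency, suppose every horizontal geodesic has length at most $L$. The goal is to verify the four-point condition with a constant $\delta$ depending only on $L$. Given $x,y,z \in X$, the canonical geodesic decomposition gives $|x \wedge y| = h_{xy} - \ell_{xy}/2$ with $\ell_{xy} \le L$, and analogous formulas for the other two pairs. Without loss of generality assume $|x \wedge z| \le |y \wedge z|$. I would then construct a path from $x$ to $y$ by taking the canonical geodesics $x \to z$ and $z \to y$, truncating them at a common ancestral level, and splicing them together using a short horizontal segment. The axiom in Definition \ref{Def'}, which allows horizontal edges at level $k$ to be lifted to horizontal edges or identifications at level $k-1$, guarantees that this splicing segment exists and has length at most $L$. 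The resulting path gives $d(x,y) \le d(x,z) + d(z,y) - 2|x \wedge z| + O(L)$, which rearranges to $|x \wedge y| \ge \min\{|x \wedge z|, |y \wedge z|\} - O(L)$.

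The main obstacle is executing this splicing cleanly, since the horizontal parts of the canonical geodesics from $x$ to $z$ and from $z$ to $y$ may live at different levels $h_{xz}, h_{yz}$, and lifting them to a common reference level changes their lengths in ways that need to be tracked. I would handle this by a case analysis on which of $h_{xz}, h_{yz}$ is larger, applying the augmented tree axiom iteratively to bring both horizontal segments up to the same level. Throughout, the uniform bound $L$ on horizontal geodesic lengths is the sole ingredient preventing the error terms from blowing up, and yields a hyperbolicity constant $\delta$ of order $L$.
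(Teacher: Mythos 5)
A preliminary remark: the paper offers no proof of this lemma --- it is imported wholesale from \cite{Wa14} --- so your argument can only be judged against the known proof there and in \cite{LaWa09}. Your necessity direction is correct and is exactly the standard midpoint argument: for a horizontal geodesic of length $\ell$ at level $n$, the midpoint $m$ satisfies $\min\{|x\wedge m|,|y\wedge m|\}\ge n-\ell/4-O(1)$ while $|x\wedge y|=n-\ell/2$, and the four-point inequality forces $\ell\le 4\delta+O(1)$.

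The sufficiency direction has a genuine gap. The splicing strategy is the right one, but the inequality you extract from it, $d(x,y)\le d(x,z)+d(z,y)-2|x\wedge z|+O(L)$, does not rearrange to the four-point condition: substituting $d(x,z)=|x|+|z|-2|x\wedge z|$ and $d(z,y)=|z|+|y|-2|z\wedge y|$ turns it into $|x\wedge y|\ge 2|x\wedge z|+|z\wedge y|-|z|-O(L)$, whose right-hand side drops far below $\min\{|x\wedge z|,|z\wedge y|\}$ (it can even be negative) whenever both Gromov products are small compared with $|z|$. Moreover, the ``main obstacle'' you flag --- bringing the two horizontal segments to a common level --- is not actually present. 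Write $\pi(x,z)=\pi(x,u_1)\cup\pi(u_1,v_1)\cup\pi(v_1,z)$ and $\pi(z,y)=\pi(z,v_2)\cup\pi(v_2,u_2)\cup\pi(u_2,y)$ with horizontal parts at levels $h_1\le h_2$ and lengths $\ell_1,\ell_2\le L$. The vertical legs $\pi(v_1,z)$ and $\pi(z,v_2)$ both consist of ancestors of $z$, so $v_1$ lies on the vertical path above $v_2$, and $x\to u_1\to v_1\to v_2\to u_2\to y$ is an honest path of length $(|x|-h_1)+\ell_1+(h_2-h_1)+\ell_2+(|y|-h_2)=|x|+|y|-2h_1+\ell_1+\ell_2$. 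Hence $|x\wedge y|\ge h_1-(\ell_1+\ell_2)/2\ge h_1-L$, while $\min\{|x\wedge z|,|z\wedge y|\}\le|x\wedge z|=h_1-\ell_1/2\le h_1$, which is the four-point condition with $\delta=L$. No lifting via Definition \ref{Def'} is needed; and note that lifting would only bound the spliced segment's length by $\ell_2$ through the edge-collapsing property, not by an appeal to the hypothesis, since lifted horizontal paths need not be geodesics. One point that does deserve care in this paper's setting, and which both your sketch and the one above suppress, is that the quotient graph $X=\Sigma^*/\sim$ need not be a tree, so the claim that the ancestors of $z$ form a chain requires justification; this is part of what \cite{Wa14} supplies.
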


In particular, if the IFS satisfies the OSC, then  the graph $(X, \E)$ indeed is an $N$-ary augmented tree which has been studied in detail in \cite{LaWa09},\cite{LaLu13},\cite{DLL15} and \cite{LaWa16}. 

The invariant set $J$ can be quite flexible, for example we can take $J=K$, or take $J=\overline{U}$ for the open set $U$ in the OSC, or take $J$ to be some sufficiently large closed ball. The graph $(X,\E)$ depends on the choice of $J$. But under our IFS as in \eqref{id-self-affine-set}, the hyperbolic boundary is the same as they can be identified with the underlying self-affine set (see  Theorem \ref{th-holderequivalence}).

Now we fix  $J=\overline{B_\delta}$ (the closure of a ball $B_\delta$). For $T\subset X$, we say  that $T$ is a {\it horizontal component} if  $T\subset X_n$ for some $n$ and $T$ is a maximal connected subset of $X_n$ with respect to ${\E}_h$.  Write $J_T:=\bigcup_{\bu\in T}S_{\bu}(J)$. Geometrically, $T$ is a horizontal component of $X$ if and only if  $J_T$ is a connected component of $\bigcup_{\bu\in X_n}S_{\bu}(J)$. 

Let  $T\subset X_n, T'\subset X_m$ be two horizontal components of $X$.  We  say  that $T$ and $T'$ are {\it equivalent}, denote by $T \sim T'$, if there exists an affine map 
$$g(x)=A^{n-m}x+\bd,  \quad\text{where }  \bd\in \Z$$  such that 
$\{g\circ S_\bu: \bu\in T\}=\{S_{\bu'}: \bu'\in T'\}$.
Obviously, if $T \sim T'$, then $\#T = \#T'$ and $g(J_{T})=J_{T'}$. Denote by $[T]$ the equivalence class and $\F$  the family of all horizontal components of  $X$.

\begin{Prop}
	Let $T, T'\in \F$, and let $\{T_1,\dots, T_n\}, \{T_1',\dots, T_{n'}'\}\subset \F$ be the horizontal components that consist  of offspring of $T, T'$ respectively. Suppose $T\sim T'$. Then $$\{[T_i]:1\le i\le n\}=\{[T_i']:1\le i\le n'\}$$ counting multiplicity. In particular, $n=n'$.
\end{Prop}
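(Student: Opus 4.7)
The plan is to show that the same affine map $g$ witnessing $T \sim T'$ descends level-by-level and induces an equivalence on each pair of offspring horizontal components. Suppose $T \subset X_\ell$ and $T' \subset X_{\ell'}$ with $g(x) = A^{\ell - \ell'}x + \bd$ realizing $T \sim T'$. This gives a bijection $\psi : T \to T'$ with $g \circ S_\bu = S_{\psi(\bu)}$. Using the identity $d_{\bu i} = d_i + A d_\bu$, a direct calculation shows that $g \circ S_{\bu i} = S_{\psi(\bu) i}$ (the \emph{same} $g$ works, since the level difference stays equal to $\ell - \ell'$), and hence $g(J_{\bu i}) = J_{\psi(\bu) i}$. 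Thus $\psi$ extends to a bijection from the offspring of $T$ onto those of $T'$ by $[\bu i] \mapsto [\psi(\bu) i]$. Because $g$ is a homeomorphism, this extension preserves horizontal edges: $J_{\bu i} \cap J_{\bv j} \ne \emptyset$ iff $J_{\psi(\bu) i} \cap J_{\psi(\bv) j} \ne \emptyset$.

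The crucial step is to verify that the offspring of $T$ form a union of \emph{complete} horizontal components of $X$, i.e., no offspring of $T$ is horizontally adjacent at level $\ell + 1$ to a vertex whose parent lies outside $T$. If $\bw \in X_{\ell + 1}$ satisfies $J_\bw \cap J_{\bu i} \ne \emptyset$ for some $\bu \in T$ and $i \in \Sigma$, then $J_{\bw^-} \cap J_\bu \ne \emptyset$, since $J_{\bw^-} \supset J_\bw$ and $J_\bu \supset J_{\bu i}$; so either $\bw^- = \bu$ or $(\bw^-, \bu) \in \E_h$, and in the latter case the maximality of the horizontal component $T$ forces $\bw^- \in T$. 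Either way, $\bw$ is an offspring of $T$, as required.

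Combining these observations, the graph isomorphism induced by $\psi$ pairs the horizontal components $\{T_1, \ldots, T_n\}$ of offspring of $T$ bijectively with $\{T_1', \ldots, T_{n'}'\}$ of offspring of $T'$. For each corresponding pair $(T_i, T_j')$, the restriction of $g$ is an affine map of the form $A^{(\ell+1)-(\ell'+1)}x + \bd = A^{\ell-\ell'}x + \bd$, so it witnesses the required equivalence of the two components; hence $\{[T_i] : 1 \le i \le n\} = \{[T_j'] : 1 \le j \le n'\}$ counting multiplicity, and in particular $n = n'$. The only real obstacle is the bookkeeping that the same affine map continues to witness the equivalence after descending to the offspring level; this works precisely because appending a common digit to corresponding elements of $T$ and $T'$ preserves both the level difference $\ell - \ell'$ and the translation vector $\bd$.
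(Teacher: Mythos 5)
Your proof is correct and follows essentially the same route as the paper: the same affine map $g$ witnessing $T\sim T'$ satisfies $g\circ S_{\bu i}=S_{\psi(\bu)i}$ (the paper gets this directly from $g\circ S_{\bu i}=g\circ S_{\bu}\circ S_i$), and since $g$ is a bijection it preserves both coincidences of maps and nonempty intersections of the cells $J_{\bu i}$, hence induces the required matching of offspring components. Your explicit check that the offspring of $T$ form a union of \emph{complete} horizontal components is a detail the paper leaves implicit, but it does not change the argument.
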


\begin{proof}
	Since $T\sim T'$, without loss of generality, we assume that $T=\{\bu_1,\dots, \bu_m\}, T'=\{\bu'_1,\dots, \bu'_m\}$ and $g\circ S_{\bu_i}=S_{\bu'_i}$ where $1\le i\le m$ and $g$ is an affine map as in the definition. Then for any  $1\le i\le m$ and  $j\in \Sigma$,   $$g\circ S_{\bu_i j}=g\circ S_{\bu_i}\circ S_j=S_{\bu_i'}\circ S_j=S_{\bu_i'j}.$$ Hence  $$\{g\circ S_{\bu_ij}: \bu_i\in T, j\in \Sigma\}=\{S_{\bu_i'j}: \bu_i'\in T', j\in \Sigma\}.$$
	
	It follows that $S_{\bu_ik}=S_{\bu_j\ell}$ if and only if $S_{\bu_i'k}=S_{\bu_j'\ell}$ and $S_{\bu_ik}(J)\cap S_{\bu_j\ell}(J)\ne\emptyset$ if and only if $S_{\bu_i'k}(J)\cap S_{\bu_j'\ell}(J)\ne\emptyset$,  completing the proof.
	\end{proof}

\begin{Def}\label{def of simple tree}
	We call the graph $(X, \E)$  {\it simple} if the equivalence classes in $\F$ is finite, that is, $\#(\F/\sim)<\infty$.
\end{Def}

We remark that the definition of simple graph is slightly stronger than the original one in \cite{DLL15} which is defined from the graphical point of view. Hence under the OSC (where the graph $(X, \E)$ becomes an $N$-ary augmented tree), we have

\begin{theorem} [\cite{DLL15}]\label{th1.1}
	Suppose an $N$-ary augmented tree $(X, {\mathcal E})$ is simple, then 
	
	(i) $(X,\E)$ is a hyperbolic graph;
	
	(ii) $\partial(X, {\mathcal E})\simeq\partial(X, {\mathcal E}_v)$, which is an $N$-Cantor set.
	\end {theorem}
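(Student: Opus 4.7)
My plan is to treat the two claims separately, using the criterion of Lemma \ref{lem-criterion of hyperbolicity} for (i) and then building a bi-Lipschitz identification of boundaries for (ii).

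For part (i), I would invoke Lemma \ref{lem-criterion of hyperbolicity} directly: it suffices to bound the length of horizontal geodesics uniformly. Simplicity gives $\#(\F/\sim)<\infty$, and since $T\sim T'$ forces $\#T=\#T'$ (as noted immediately after the definition of equivalence of horizontal components), there is a uniform bound $M:=\max\{\#T:T\in\F\}<\infty$. A horizontal geodesic uses only edges in $\E_h$ and therefore stays inside a single horizontal component, so its length is at most $M-1$. Hyperbolicity follows.

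For part (ii), the idea is to show that the canonical identification between geodesic rays in the tree $(X,\E_v)$ and in the augmented graph $(X,\E)$ extends to a bi-Lipschitz map $\Phi:\partial(X,\E_v)\to\partial(X,\E)$ of the visual metrics. Rays in the $N$-ary tree are in bijection with $\Sigma^\infty$, which already identifies $\partial(X,\E_v)$ with the standard $N$-Cantor set. For any two infinite words $\xi,\eta$, I would compare the Gromov products $|\xi\wedge\eta|_v$ (in the tree) and $|\xi\wedge\eta|_h$ (in the augmented graph). One inequality is immediate: since $\E_v\subset\E$, distances in the augmented graph are no larger, so $|\xi\wedge\eta|_h\ge |\xi\wedge\eta|_v$. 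What remains is to prove a matching upper bound
\begin{equation*}
|\xi\wedge\eta|_h\le |\xi\wedge\eta|_v+C
\end{equation*}
with $C$ depending only on the simplicity data, because then $e^{-aC}\rho_a^v\le\rho_a^h\le\rho_a^v$ on the common infinite-word space, giving the bi-Lipschitz equivalence.

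The main obstacle is precisely that uniform upper bound, which requires controlling how long horizontal connectivity can persist across the two subtrees of the tree-divergence point. I would argue as follows. Let $k=|\xi\wedge\eta|_v$, so $\xi(k)$ and $\eta(k)$ are distinct children of a common ancestor. For each level $n>k$, the canonical geodesic from $\xi(n)$ to $\eta(n)$ has horizontal part at some level $h_n$, length $\ell_n\le M-1$, living in a horizontal component $T_n$ that contains simultaneously an ancestor of $\xi(n)$ and of $\eta(n)$, hence must straddle the two subtrees rooted at $\xi(k)$ and $\eta(k)$. By the structural proposition preceding Definition \ref{def of simple tree}, the equivalence class $[T_{n-1}]$ of its parent component is determined by $[T_n]$, so ascending from any such straddling $T_n$ one obtains a descending sequence of straddling components whose equivalence classes lie in the finite set $\F/\sim$. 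If for arbitrarily deep $n$ the straddle persisted on more than $\#(\F/\sim)$ consecutive levels, a pigeonhole/loop argument in the finite type-transition graph would produce a nested infinite chain of straddling components, forcing $|\xi\wedge\eta|_h=\infty$ and thus $\xi=\eta$ by the uniqueness of rays in the tree (a contradiction, or else $\xi=\eta$ and there is nothing to prove). Hence $h_n\le k+C$ for a constant $C$ depending only on $M$ and $\#(\F/\sim)$, which gives the desired inequality and completes the proof that $\partial(X,\E)\simeq\partial(X,\E_v)$, the $N$-Cantor set.
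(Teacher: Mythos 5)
The paper does not actually prove this theorem: it is quoted verbatim from \cite{DLL15}, so there is no internal argument to compare against. Judged on its own merits, your part (i) is correct and is the standard argument: a horizontal geodesic is a simple path inside a single horizontal component, equivalent components have equal cardinality, and finiteness of $\F/\sim$ therefore bounds all horizontal geodesics by $M-1$; Lemma \ref{lem-criterion of hyperbolicity} then gives hyperbolicity. Your reduction of part (ii) to the two-sided comparison of Gromov products is also the right framework, and the easy inequality $|\xi\wedge\eta|_h\ge|\xi\wedge\eta|_v$ is fine since levels are preserved and adding edges only shortens distances.

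The gap is in the key step of part (ii), the uniform bound $|\xi\wedge\eta|_h\le|\xi\wedge\eta|_v+C$. Your contradiction rests on the implication ``$|\xi\wedge\eta|_h=\infty$ forces $\xi=\eta$ by the uniqueness of rays in the tree,'' but this conflates the two boundaries. An infinite Gromov product in $(X,\E)$ only says that the two rays represent the \emph{same point of} $\partial(X,\E)$; it in no way contradicts their being distinct rays of $(X,\E_v)$. Indeed, whenever the coding map of the attractor is non-injective (touching cylinders), distinct tree rays do stay horizontally connected at every level and are identified in $\partial(X,\E)$ --- ruling this out under the simplicity hypothesis is precisely the injectivity statement that part (ii) asserts, so your argument is circular at exactly the point where the hypothesis must do real work. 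The pigeonhole step has a second, independent defect: when $T^{(i)}\sim T^{(j)}$ repeats in the descending chain of straddling components, the affine equivalence $g$ transports the sub-chain between levels $i$ and $j$ to a chain below $T^{(j)}$ that follows the $g$-images of $\xi$ and $\eta$, not $\xi$ and $\eta$ themselves; so no periodicity or persistence conclusion about the original pair follows, and one cannot even conclude that the straddle along $\xi,\eta$ continues. Finally, even if one proved that every distinct pair separates at some finite level, bi-Lipschitz equivalence needs the separation level to exceed the tree branching level by a \emph{uniform} constant, which requires a further finiteness/compactness argument over the equivalence classes that you have not supplied. To repair the proof you would need a genuine structural lemma (as in \cite{DLL15}) showing that in a simple augmented tree a non-singleton component cannot propagate a fixed ``straddling'' pair of vertices through more than a bounded number of generations.
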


\begin{Lem}\label{h-condition}
Let $K$ be an integral self-affine set as in \eqref{id-self-affine-set}. Then for any bounded closed invariant set $J$, there exist $c>0$ and $k\ge 0$ such that for any $n\ge 1$ and ${\bf u,v}\in \Sigma^n$, 
$$J_\bu\cap J_\bv=\emptyset \implies \hbox{dist}_w(J_{\bf ui}, J_{\bf vj})\ge cq^{{-n}/d},  \quad\forall \  {\bf i,j}\in \Sigma^k.$$
\end{Lem}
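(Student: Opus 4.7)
The plan is to reduce the statement to a uniform $w$-separation bound for integer translates of $J$, then establish that bound by compactness combined with the scaling law $w(Ax)=q^{1/d}w(x)$. From $J_{\bu\bi}=A^{-n}(S_\bi(J)+d_\bu)$ and the analogous expression for $J_{\bv\bj}$, the scaling law gives
\[
\dist_w(J_{\bu\bi},J_{\bv\bj})
\;=\; q^{-n/d}\,\dist_w\bigl(S_\bi(J),\,S_\bj(J)+\bt\bigr)
\;\ge\; q^{-n/d}\,\dist_w(J,J+\bt),
\]
where $\bt:=d_\bv-d_\bu\in\Z$ and we used the invariance $S_\bi(J),S_\bj(J)\subset J$. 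Applying $A^n$ also shows that $J_\bu\cap J_\bv=\emptyset$ is equivalent to $J\cap(J+\bt)=\emptyset$. Hence the lemma reduces (for any $k\ge 0$, including $k=0$) to producing a constant $c>0$ with $\dist_w(J,J+\bt)\ge c$ for every $\bt\in\Z$ satisfying $J\cap(J+\bt)=\emptyset$.

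To build such a $c$, split on $\|\bt\|$. Let $R=\textup{diam}(J)$. The set of $\bt\in\Z$ with $\|\bt\|\le 2R+1$ is finite, and for each such $\bt$ with $J\cap(J+\bt)=\emptyset$, continuity of $w$ together with the fact that $w(x)=0\iff x=0$ forces $\dist_w(J,J+\bt)>0$; let $c_1$ be the minimum over this finite list. For $\|\bt\|>2R+1$, every $x\in J$ and $y\in J+\bt$ satisfy $\|x-y\|\ge 1$, so it suffices to establish that $c_2:=\inf_{\|z\|\ge 1}w(z)>0$; then $c:=\min(c_1,c_2)$ works.

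The crucial step is showing $c_2>0$, and it is the only place where the scaling law is truly indispensable, since continuity of $w$ alone cannot control it on the unbounded region $\{\|z\|\ge 1\}$. For a given $z$ with $\|z\|\ge 1$, let $k=k(z)\ge 0$ be the smallest integer with $\|A^{-k}z\|\le 1$; this exists because $A$ is expanding. By minimality, $\|A^{-k}z\|>1/\|A\|$ (or $\|z\|=1$ if $k=0$), so $A^{-k}z$ lies in the compact annulus $E:=\{x:\,1/\|A\|\le\|x\|\le 1\}$, which is bounded away from the origin. Continuity and positivity of $w$ on $E$ give $\alpha:=\min_E w>0$, and the scaling law yields $w(z)=q^{k/d}w(A^{-k}z)\ge q^{k/d}\alpha\ge\alpha$, so $c_2\ge\alpha>0$ and the lemma is proved. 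The main obstacle lies precisely in this last step: one cannot treat $w$ as a norm, and must instead exploit $A$-equivariance to transfer positivity of $w$ on a single compact annulus to the whole unbounded region $\{\|z\|\ge 1\}$.
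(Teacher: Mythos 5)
Your proof is correct, and it takes a genuinely more direct route than the paper's. The paper first proves the separation estimate for the attractor $K$ itself, via the constant $c'=\inf\{w(x): x\in K-K+d,\ d\in\Z,\ 0\notin K-K+d\}$, and then transfers the bound to the larger invariant set $J$ by choosing $k$ so large that $d_H^w(K_\bi,J_\bi)\le c_1q^{-k/d}<c'/3$ for all $\bi\in\Sigma^k$; the depth $k$ is genuinely needed there because only the $K$-pieces are separated by that argument. You instead work with $J$ directly: using $w(Ax)=q^{1/d}w(x)$ and the fact that the translations $\bt=d_\bv-d_\bu$ lie in $\Z$, you reduce everything to the single uniform bound $\inf\{\dist_w(J,J+\bt):\bt\in\Z,\ J\cap(J+\bt)=\emptyset\}>0$, which gives the conclusion already with $k=0$ (and hence for every $k$, by the monotonicity $J_{\bu\bi}\subset J_\bu$) -- a slightly stronger statement than the lemma asserts. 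Your handling of the unbounded range of $\bt$, by pulling a vector $z$ with $\|z\|\ge 1$ back into the compact annulus $\{x: 1/\|A\|\le\|x\|\le 1\}$ and invoking the scaling law, is precisely the point the paper compresses into the phrase ``$c'>0$ as $K$ is compact''; that phrase is too quick, since the union over $d$ there is unbounded, and your argument supplies the missing detail. Two minor remarks: you rely on continuity of $w$, which the paper does not list among properties (1)--(3) but which holds by the construction in \cite{HeLa08} (alternatively, the positive lower bound on the annulus follows directly from Proposition \ref{prop-norms}(i), with no appeal to continuity); and in the first display the equality sign is doing real work, so it is worth noting explicitly that $S_\bu(x)-S_\bv(y)=A^{-n}(x-y+d_\bu-d_\bv)$, which is exactly the computation the paper also records.
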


\begin{proof}
	We first claim that there exists $c'>0$ such that for any integer $n\ge 1$ and $\bu,\bv\in \Sigma^n$,  $$K_\bu\cap K_\bv=\emptyset \implies \hbox {dist}_w(K_\bu, K_{\bv})\ge c'q^{{-n}/d}.$$
Indeed let $$c'=\inf\{w(x): x\in\cup\{K-K+d: d\in {\mathbb Z}^d \text{ such that } 0\notin K-K+d\}\}.$$ Then $c'>0$ as $K$ is compact. If $K_\bu\cap K_\bv=\emptyset$, then for any $x,y\in K$, we have 
$$w(S_\bu(x)-S_\bv(y))=w(A^{-n}(x-y+d_\bu-d_\bv))=q^{{-n}/d}w(x-y+d_\bu-d_\bv)>0.$$ By making use of $d_\bu, d_\bv\in {\mathbb Z}^d$ and the above expression of $c'$, hence we have $$w(S_\bu(x)-S_\bv(y))\ge c'q^{{-n}/d}.$$

For the  invariant set $J$,  we have $K\subset J$ and the $w$-Hausdorff distance $d_H^w(K_\bi, J_\bi)\le c_1q^{{-k}/d}$ for all $\bi\in \Sigma^k$. In particular we take $k$ so that $c_1q^{{-k}/d}< c'/3$.

If $J_\bu\cap J_\bv=\emptyset$, then $K_\bu\cap K_\bv=\emptyset$, it follows that $\hbox {dist}_w(K_\bu, K_{\bv})\ge c'q^{{-n}/d}.$  Applying this to the level $n+k$, we have 
$$
\begin{aligned}
\hbox {dist}_w (J_{\bf ui}, J_{\bf vj}) & \geq  \hbox {dist}_w (K_{\bf ui}, K_{\bf vj}) -  d_H^w(K_{\bf ui}, J_{\bf ui})- d_H^w(K_{\bf vj}, J_{\bf vj})\\
&  \geq
 c'q^{{-n}/d} -(2c'/3) q^{{-n}/d}\geq (c'/3)q^{{-n}/d}  \qquad \forall \ {\bf i},\  {\bf j} \in \Sigma^k.
\end{aligned}
$$
The lemma follows by taking $c = c'/3$.
\end{proof}

\begin{Lem}\label{lem-WSC}
	Let $\{S_i\}_{i=1}^N$ be the IFS as in \eqref{id-self-affine-set}. Then for any $b>0$, there exists a constant $\gamma :=\gamma(b)$ such that for any set $V\subset {\mathbb R}^d$ with $\hbox {diam}_w(V)\le b$, $$\#\{\bu\in X: \  (J+d_\bu)\cap V\ne\emptyset\}\le\gamma,$$
	where $d_{\bu}=d_{i_n}+Ad_{i_{n-1}}+\cdots+A^{n-1}d_{i_1}$ for $\bu=i_1\cdots i_n\in X$.
\end{Lem}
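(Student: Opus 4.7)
The plan is to exploit the quasi-ultra-metric inequality satisfied by the pseudo-norm $w$ together with the discreteness of $\mathbb{Z}^d$. The condition $(J+d_\bu)\cap V\ne \emptyset$ forces $d_\bu$ to lie close (in the $w$-sense) to the set $V$, and since $V$ has $w$-diameter at most $b$ while $J$ has $w$-diameter $M_0:=\hbox{diam}_w(J)<\infty$, this should confine $d_\bu$ to a $w$-bounded region. Integer points in such a region are necessarily finite in number, and a per-level injectivity argument then converts a bound on the admissible $d_\bu$'s into a bound on the admissible $\bu$'s.

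More precisely, I would fix a reference $\bu_0$ with $(J+d_{\bu_0})\cap V\ne \emptyset$ (otherwise the counted set is empty and there is nothing to prove). For any other admissible $\bu$, I would pick $j_\bu,j_{\bu_0}\in J$ and $v_\bu,v_{\bu_0}\in V$ with $d_\bu+j_\bu=v_\bu$ and $d_{\bu_0}+j_{\bu_0}=v_{\bu_0}$, write
$$
d_\bu-d_{\bu_0}=(v_\bu-v_{\bu_0})-(j_\bu-j_{\bu_0}),
$$
and apply property~(3) of $w$ once to obtain $w(d_\bu-d_{\bu_0})\le \beta\max\{b,M_0\}=:R$. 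Hence every admissible $d_\bu$ lies in the translate $d_{\bu_0}+\Lambda_R$, where $\Lambda_R=\{z\in\mathbb{Z}^d:w(z)\le R\}$. By Proposition~\ref{prop-norms}(i) (or simply by the fact that $w$ induces the Euclidean topology on $\mathbb{R}^d$), the $w$-ball of radius $R$ is contained in some Euclidean ball, so $\Lambda_R$ is finite; I set $\gamma:=\#\Lambda_R$, a quantity depending only on $b$ (through $R$) and the fixed data $\beta,M_0$.

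To finish, I would argue that at each level $n$ the map $\bu\mapsto d_\bu$ is injective on $X_n$, because $S_\bu(x)=A^{-n}(x+d_\bu)$ recovers $d_\bu$ from $S_\bu$ and distinct classes in $X_n$ carry distinct $S_\bu$ by definition of $\sim$. This yields $\#\{\bu\in X_n:(J+d_\bu)\cap V\ne\emptyset\}\le\gamma$, the bound being uniform in the level $n$ and in the choice of $V$; this is precisely the form of the estimate required later for controlling horizontal components in the graph $(X,\E)$.

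The only point demanding any care is the bookkeeping around property~(3): it is a quasi-ultra-metric inequality, not a true metric triangle inequality, so each application inflates the bound by a factor of $\beta$. In the estimate above only one application is actually needed, because $w(v_\bu-v_{\bu_0})$ and $w(j_\bu-j_{\bu_0})$ are directly controlled by the $w$-diameters of $V$ and $J$ respectively, which keeps $R=\beta\max\{b,M_0\}$ explicit and independent of $n$ and of the particular $V$. Everything else (existence of $\bu_0$, injectivity of $\bu\mapsto d_\bu$ on each $X_n$, and finiteness of $\Lambda_R$) is routine, so I expect no essential obstacle beyond isolating this constant.
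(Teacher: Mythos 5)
Your argument is essentially the paper's: the paper disposes of this lemma in one line (``$\{d_\bu:\bu\in X\}\subset\Z$ is uniformly discrete and $J,V$ are bounded''), and what you have written is the honest expansion of that line --- one application of the quasi-ultra-metric inequality to confine the admissible $d_\bu$ to a $w$-ball of radius $R=\beta\max\{b,\hbox{diam}_w(J)\}$ about $d_{\bu_0}$, boundedness of $w$-balls to make that a finite set of lattice points, and injectivity of $\bu\mapsto d_\bu$ on each $X_n$ to convert lattice-point counting into vertex counting. All three steps are sound; the only small caveat is that the finiteness of $\Lambda_R$ does not quite follow from Proposition \ref{prop-norms}(i) as stated (that estimate is formulated only for $\|x\|\le 1$), but rather from the standard fact in \cite{HeLa08} that sublevel sets of $w$ are Euclidean-bounded, which comes from $w(Ax)=q^{1/d}w(x)$ together with the positivity and continuity of $w$ off the origin.

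The one real discrepancy is that you prove the per-level bound $\#\{\bu\in X_n:(J+d_\bu)\cap V\ne\emptyset\}\le\gamma$, uniformly in $n$, whereas the lemma as printed counts over all of $X=\bigcup_n X_n$. These are not the same: $\bu\mapsto d_\bu$ is injective on each $X_n$ but not on $X$. If, say, $0\in\D$, then $d_{1^n}=0$ for every $n$ and the words $1^n$ are pairwise distinct vertices of $X$ at different levels, so the global count with $V=J$ is infinite; the lemma as literally stated is therefore false, and no proof could close that gap. Your per-level version is the correct reading, and it is exactly what the two applications in the proof of Theorem \ref{th-holderequivalence} use (the bounded-degree estimate is a sum over three consecutive levels, each with its own $V$ and its own $\gamma(\cdot)$, and the hyperbolicity contradiction only needs the count on the single level $X_{n-m}$). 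In short: no gap in your argument, but you should state explicitly that your conclusion is per level, and note that the paper's formulation needs the same correction.
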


\begin{proof}
	The lemma follows directly from the fact that  $\{d_{\bu}:\bu\in X\}\subset \Z$ is uniformly discrete and $J, V$ are bounded subsets of ${\mathbb R}^d$.
\end{proof}

\begin{theorem}\label{th-holderequivalence}
	Let $K$ be the integral self-affine set as in \eqref{id-self-affine-set} with $|\det A|=q$. Then the graph $(X, \E)$ is hyperbolic. Moreover, there exists a H\"older bijection $\phi: \partial X \to K$  satisfying the property:
	\begin{equation}\label{eq-holder}
	C^{-1}w(\phi(\xi)-\phi(\eta)) \le \rho_a(\xi,\eta)^\alpha \le Cw(\phi(\xi)-\phi(\eta)), \quad \forall \  \xi\ne \eta \in \partial X
	\end{equation}
		where $\alpha=\frac{\log q}{da}$ and $C>0$ is a constant.
\end{theorem}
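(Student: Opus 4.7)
The plan is to establish the hyperbolicity of $(X,\E)$ via Lemma~\ref{lem-criterion of hyperbolicity}, and then to construct $\phi:\partial X\to K$ as a nested intersection of the invariant-set images $S_\bu(J)$, reading the H\"older exponent off the scaling law $w(Ax)=q^{1/d}w(x)$.

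For hyperbolicity, consider a horizontal geodesic $\bu_0,\bu_1,\dots,\bu_m$ at level $n$. Consecutive pieces $J_{\bu_i}$ meet by definition of $\E_h$, so after rescaling by $A^n$ the integer translates $J+d_{\bu_i}$ form a $w$-connected chain. Iterating the ultra-triangle inequality $w(x+y)\le\beta\max\{w(x),w(y)\}$ controls the $w$-diameter of this chain in terms of $\text{diam}_w J$ and $m$, while the geodesic property rules out shortcuts through any ancestor level. Combined with Lemma~\ref{lem-WSC}, which bounds the number of distinct integer translates of $J$ meeting a $w$-bounded region, this forces $m\le L$ for some constant $L$ independent of $n$. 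Hence horizontal geodesics are uniformly bounded and Lemma~\ref{lem-criterion of hyperbolicity} gives hyperbolicity.

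Next I define $\phi(\xi):=\bigcap_{n\ge 0}S_{\bu_n}(J)$ for any geodesic ray $(\bu_n)$ representing $\xi\in\partial X$. Since $S_i(J)\subset J$ the nest is decreasing, and $\text{diam}_w S_{\bu_n}(J)=q^{-n/d}\text{diam}_w J\to 0$, so $\phi(\xi)$ is a single point; taking the same intersection with $K$ shows $\phi(\xi)\in K$. Well-definedness across ray choices follows because any two rays with the same limit stay at bounded graph distance, so their level-$n$ pieces lie in a common horizontal component and intersect. Surjectivity onto $K$ is the standard coding argument. For injectivity, $\phi(\xi)=\phi(\eta)$ would force $S_{\bu_n}(J)\cap S_{\bv_n}(J)\ne\emptyset$ for every $n$, and the uniform bound $L$ then yields $|\bu_n\wedge\bv_n|\ge n-L/2\to\infty$, contradicting $\xi\ne\eta$.

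For the H\"older estimate \eqref{eq-holder}, write the canonical-geodesic Gromov product as $|\xi\wedge\eta|=h-\ell/2$ with $\ell\le L$, so that $\rho_a(\xi,\eta)^\alpha=q^{-(h-\ell/2)/d}\asymp q^{-h/d}$ for $\alpha=\log q/(da)$. The upper bound $w(\phi(\xi)-\phi(\eta))\le Cq^{-h/d}$ comes from iterating the ultra-triangle inequality along the length-$\ell$ chain at level $h$, each link of $w$-size at most $q^{-h/d}\text{diam}_w J$. The lower bound is where I expect the main obstacle: I would locate the shallowest level $h+k_0$ at which the descending rays fall into disjoint pieces (canonicality of the geodesic forces $k_0$ bounded in terms of $L$), and apply Lemma~\ref{h-condition} to that disjoint pair to obtain $\dist_w(J_{\bu_{h+k_0+k}},J_{\bv_{h+k_0+k}})\ge cq^{-(h+k_0)/d}$. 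Since $\phi(\xi)$ and $\phi(\eta)$ belong to these respective pieces and $k_0,k$ are bounded, this delivers $w(\phi(\xi)-\phi(\eta))\ge c'q^{-h/d}\asymp\rho_a(\xi,\eta)^\alpha$. The delicate point will be controlling $k_0$ uniformly across all pairs $\xi,\eta$ through the canonical-geodesic structure, so that the constant $C$ in \eqref{eq-holder} is truly global rather than depending on the pair.
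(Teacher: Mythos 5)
Your overall architecture matches the paper's: hyperbolicity via Lemma \ref{lem-criterion of hyperbolicity}, $\phi$ as a nested intersection of cells, a chain estimate for the upper H\"older bound, and Lemma \ref{h-condition} for the lower bound. But the hyperbolicity step, as you have assembled it, does not close, and that is the one place in this theorem with real content.

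You bound the $w$-diameter of the chain $\bigcup_i (J+d_{\bu_i})$ ``in terms of $\mathrm{diam}_w J$ and $m$'' and then invoke Lemma \ref{lem-WSC}. But that lemma only bounds the number of translates meeting a set of $w$-diameter $b$ by a constant $\gamma(b)$ that depends on $b$; applied at level $n$ with $b=b(m)$ growing in $m$, it yields $m+1\le \gamma(b(m))$, which is no contradiction (for a uniformly discrete family of integer translates one expects $\gamma(b)$ to grow like a power of $b$, so the inequality holds for all large $m$). The missing idea is a comparison across two scales: take a horizontal geodesic of length $3m$ at level $n$ and pass to its $m$-th generation ancestors. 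The union $V'=\bigcup_{i=0}^{3m}J_{\bu_i}$ satisfies $\mathrm{diam}_w V'\le (3m+1)q^{-n/d}\mathrm{diam}_w J\le q^{(m-n)/d}\mathrm{diam}_w J$ once $m$ is chosen with $(3m+1)q^{-m/d}\le 1$; that is, after rescaling by $A^{n-m}$ its diameter is at most the constant $\mathrm{diam}_w J$, independent of $m$. On the other hand, the geodesic property forces any horizontal path through the ancestors joining $\bu_0^{-m}$ to $\bu_{3m}^{-m}$ to have length at least $3m-2m=m$, so at least $m+1$ distinct level-$(n-m)$ cells meet $V'$. Choosing $m>\gamma(\mathrm{diam}_w J)$ then contradicts Lemma \ref{lem-WSC}. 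Without this drop of $m$ levels the counting argument has nothing to bite on, since both sides of your inequality grow with $m$.

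The rest of your plan is sound, and the ``delicate point'' you flag in the lower bound is not delicate: the sub-path of the bilateral canonical geodesic joining $\bu_{n+1}$ to $\bv_{n+1}$ has length $\ell+3>1$, so $(\bu_{n+1},\bv_{n+1})$ cannot be a horizontal edge and $J_{\bu_{n+1}}\cap J_{\bv_{n+1}}=\emptyset$ already one level below the horizontal part. Your $k_0$ equals $1$ uniformly, and Lemma \ref{h-condition} applies directly with a global constant, exactly as in the paper.
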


\begin{proof}
The proof generalizes the self-similar case by some modifications (see \cite{LaLu13} or \cite{Wa14}).  For any $\bu\in X$ with $|\bu|=n$, let $V=A^n J_\bu$, then $\hbox {diam}_w(V)= \hbox {diam}_w(J):=b$.  By Lemma \ref{lem-WSC}, we have 
$$\#\{\bv\in X: |\bv|=n-1, n \text{ or } n+1, J_\bv\cap J_\bu\ne \emptyset\}\le \gamma(bq^{\frac 1d})+\gamma(b)+\gamma(bq^{-\frac{1}{d}}).$$ 
Hence the graph $(X,\E)$ is of bounded degree.

Suppose $(X,\E)$ is not hyperbolic, by Lemma \ref{lem-criterion of hyperbolicity}, then for any $m>0$, there exists a horizontal geodesic $\pi(\bu_0, \bu_{3m})=[\bu_0,\bu_1,\dots, \bu_{3m}]$ with $\bu_i\in X_n$ for some $n$. Consider $m$-th generation ancestors $\{\bu_0^{-m}, \bu_1^{-m},\dots, \bu_{3m}^{-m}\}$. By the definition of the graph, we have either $\bu_i^{-m}=\bu_{i+1}^{-m}$ or $(\bu_i^{-m},\bu_{i+1}^{-m})\in \E_h$. Then there is a path $p(\bv_0, \bv_1,\dots, \bv_\ell)$ joining $\bu_0^{-m}$ and $\bu_{3m}^{-m}$ where $\bv_0=\bu_0^{-m}, \bv_\ell=\bu_{3m}^{-m}$ and $\bv_i\in\{\bu_0^{-m}, \bu_1^{-m},\dots, \bu_{3m}^{-m}\}$. Without loss of generality, we may assume  $p(\bv_0, \bv_1,\dots, \bv_\ell)$ is the shortest horizontal path joining $\bu_0^{-m}$ and $\bu_{3m}^{-m}$. By the geodesic property of  $\pi(\bu_0, \bu_{3m})$, it is clear that $$\ell\ge |\pi(\bu_0, \bu_{3m})|-2m=m.$$

Now choose $m\ge \gamma$ such that $(3m+1)q^{-m/d}\le 1$, where $\gamma$ is as in Lemma \ref{lem-WSC}. Let $$V'=\bigcup_{i=0}^{3m}J_{\bu_i}.$$ Then $$\hbox {diam}_w(V')\le \sum_{i=0}^{3m}\hbox {diam}_w(J_{\bu_i})= (3m+1)q^{-n/d}\hbox {diam}_w(J)\le q^{(m-n)/d}\hbox {diam}_w(J).$$ 
Note that for each $i$ there exists $j$ such that $\bv_i=\bu_{j}^{-m}$, it follows that $J_{\bu_j}\subset J_{\bv_i}$.  Thus $J_{\bv_i}\cap V'\ne\emptyset$.   Let $V=A^{n-m}V'$. Then $\hbox {diam}_w(V)\le \hbox {diam}_w(J)$.  It follows that $$\#\{\bv\in X:  (J+d_\bv)\cap V\ne\emptyset\}\ge\#\{\bv\in X_{n-m}: J_{\bv}\cap V'\ne\emptyset\}\ge \ell+1>m\ge \gamma,$$ which contradicts Lemma \ref{lem-WSC}. Therefore, $X$ is hyperbolic.

For any geodesic ray $\xi=
\pi[{\mathbf u}_1,{\mathbf u}_2,\ldots]$ of $X$,
we define
$$
\phi(\xi)=\lim_{n\rightarrow \infty} S_{{\mathbf u}_n}(x_0)
$$
for some $x_0\in J$. Then the map is well-defined  and is bijective (see \cite{Wa14}).

To show that $\phi$  is the desired H\"older map,  we let $\xi= \pi[{\mathbf
	u}_0,{\mathbf u}_1,\ldots], \ \eta= \pi[{\mathbf
	v}_0,{\mathbf v}_1,\ldots]$ be any two non-equivalent
geodesic rays in $X$. Then there is a canonical bilateral geodesic $\tau$ joining $\xi$ and $\eta$:
$$
\tau=\pi[\dots,{\mathbf u}_{n+1},{\mathbf u}_n,{\mathbf t}_1,\dots,{\mathbf t}_{\ell},{\mathbf v}_n,{\mathbf v}_{n+1},\dots]
$$
with ${\mathbf u}_n,{\mathbf t}_1,\dots,{\mathbf t}_{\ell}, {\mathbf
	v}_n\in X_n$. It follows that
$$
 w(S_{{\mathbf
		u}_n}(x_0)-S_{{\mathbf v}_n}(x_0))\leq (\ell+2) q^{-n/d} \hbox {diam}_w(J).
$$
By Lemma \ref{lem-criterion of hyperbolicity}, $\ell$ is uniformly bounded. Note that $\phi(\xi)\in
J_{{\mathbf u}_k}$ and $\phi(\eta)\in J_{{\mathbf v}_k}$ for all
$k\geq 0$, hence
$$
w(\phi(\xi)-S_{{\mathbf u}_n}(x_0)), \   w(\phi(\eta)-S_{{\mathbf v}_n}(x_0))
\leq q^{-n/d} \hbox {diam}_w(J).
$$
Using the property $w(x+y)\le \beta\max\{w(x), w(y)\}$ twice,  there exists a constant $C_1>0$ such that
$$
w(\phi(\xi)-\phi(\eta))\le  C_1 q^{-n/d}.
$$
Since $\tau$  is a bilateral canonical
geodesic, we have $|\xi\wedge\eta|=n-(\ell+1)/2$ and $\ell$ is
uniformly bounded. By using $\rho_a(\xi,\eta)=\exp(-a
|\xi\wedge\eta|)$, we see that
$$
w(\phi(\xi)-\phi(\eta))\leq C\rho_a(\xi,\eta)^{\alpha}.
$$

On the other hand,  assume that $\xi \ne \eta$. Since $\tau$ is a
geodesic, it follows that $({\mathbf u}_{n+1},{\mathbf
	v}_{n+1})\notin {\mathcal {E}}_h$, and hence $J_{{\mathbf
		u}_{n+1}}\cap J_{{\mathbf v}_{n+1}}=\emptyset$. By Lemma \ref{h-condition},  there is $k$ (independent of $n$) such that
\begin{equation*}
J_{\bf u} \cap J_{\bf v} = \emptyset \  \  \Rightarrow \  \  \hbox
{dist}_w (J_{\bf ui}, J_{\bf vj}) \geq c q^{-n/d}, \quad \forall \  {\bf
	i},\  {\bf j} \in \Sigma^k.
\end{equation*}
As $\phi(\xi) \in J_{{\bf u}_{n+k+1}}, \ \phi(\xi) \in J_{{\bf v}_{n+k+1}}$,  we have 
$$
w(\phi(\xi)-\phi(\eta))\geq \hbox{dist}_w(J_{{\mathbf u}_{n+k+1}},
J_{{\mathbf v}_{n+k+1}})\geq cq^{-n/d},
$$
and $w(\phi(\xi)-\phi(\eta))\geq c'\rho_a(\xi,\eta)^{\alpha}$
follows by the definition of  $\rho_a$.
\end{proof}

\bigskip

\section{Lipschitz equivalence of self-affine sets}

We first show that the $w$-Hausdorff dimension is an invariant under the $w$-Lipschitz equivalence. 

\begin{Prop}\label{prop-diminvariant}
	If $E\simeq_w F$ then $\dim^w_H E=\dim^w_H F$.
\end{Prop}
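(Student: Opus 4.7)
The plan is to imitate the classical proof that bi-Lipschitz maps preserve Hausdorff dimension, with the Euclidean norm replaced by the pseudo-norm $w$ throughout. The crucial observation is that the defining inequality of $\simeq_w$ controls $w$-diameters: if $\sigma:E\to F$ is a $w$-bi-Lipschitz bijection with constant $C>0$, then for any $V\subset E$
$$
\hbox{diam}_w(\sigma(V))=\sup\{w(\sigma(x)-\sigma(y)):x,y\in V\}\le C\sup\{w(x-y):x,y\in V\}=C\,\hbox{diam}_w(V).
$$
This is immediate from the definitions and does not use the ultra-metric-type inequality $w(x+y)\le\beta\max\{w(x),w(y)\}$; the pseudo-norm character of $w$ plays no role here.

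Next I would unfold the definition of $\mathcal H_w^s$. For $\delta>0$ set
$$
\mathcal H_{w,\delta}^s(E)=\inf\Big\{\sum_{i}(\hbox{diam}_w U_i)^s:\ E\subset\bigcup_i U_i,\ \hbox{diam}_w(U_i)\le\delta\Big\},
$$
and then $\mathcal H_w^s(E)=\lim_{\delta\to 0}\mathcal H_{w,\delta}^s(E)$. Given an admissible cover $\{U_i\}$ of $E$ with $w$-mesh $\le\delta$, replace each $U_i$ by $U_i\cap E$ (this only decreases diameters) and push forward by $\sigma$; the sets $\sigma(U_i\cap E)$ cover $F$, and by the first step each has $w$-diameter at most $C\delta$. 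Taking infima gives $\mathcal H_{w,C\delta}^s(F)\le C^s\mathcal H_{w,\delta}^s(E)$, so letting $\delta\to 0$ yields $\mathcal H_w^s(F)\le C^s\mathcal H_w^s(E)$.

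Applying the identical argument to the inverse $\sigma^{-1}:F\to E$, which is also $w$-bi-Lipschitz with constant $C$, we obtain $\mathcal H_w^s(E)\le C^s\mathcal H_w^s(F)$. Hence for every $s\ge 0$ the two measures $\mathcal H_w^s(E)$ and $\mathcal H_w^s(F)$ are finite or infinite together, so the critical exponents coincide:
$$
\dim_H^w E=\inf\{s:\mathcal H_w^s(E)=0\}=\inf\{s:\mathcal H_w^s(F)=0\}=\dim_H^w F.
$$
There is no real obstacle; the only point that deserves a line of comment is that, because $w$ is only a pseudo-norm rather than a norm, one should verify that the covering definition of $\mathcal H_w^s$ behaves well, but this is already granted by the properties of $w$ listed in Section 2 and used implicitly in \cite{HeLa08} and Theorem \ref{th-dimformular}.
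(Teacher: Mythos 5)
Your proof is correct and is essentially the paper's own argument: the paper simply states that the proof is the same as Corollary 2.4 of Falconer's book with the Euclidean norm replaced by $w$, and what you have written out is exactly that standard covering argument, including the correct observation that only the diameter-scaling property (and not a triangle inequality for $w$) is needed.
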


\begin{proof}
	The proof is the same as Corollary 2.4 of \cite{F} by replacing the Euclidean norm with pseudo-norm $w$.
\end{proof}

Let $\lambda_1$ and $\lambda_0$ be the maximal and minimal moduli of eigenvalues of $A$ defining the pseudo-norm $w$. There is a relationship between $w$-Lipschitz equivalence and nearly Lipschitz equivalence.

\begin{Prop}\label{prop}
	Suppose $\lambda_0=\lambda_1$. If $E\simeq_w F$ then $E\simeq_n F$.
\end{Prop}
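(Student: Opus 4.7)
\medskip

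\noindent\textbf{Proof proposal.} The plan is to use the very same bijection $\sigma:E\to F$ furnished by the $w$-Lipschitz equivalence and convert the two-sided $w$-estimate into a two-sided Euclidean H\"older estimate via Proposition \ref{prop-norms}, taking advantage of the hypothesis $\lambda_0=\lambda_1=:\lambda$ to make the exponents on the two sides of Proposition \ref{prop-norms}(i) collapse to the same value in the limit $\epsilon\to 0$.

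First I would observe that since $\sigma$ is a bi-$w$-Lipschitz bijection and $E\subset \mathbb{R}^d$, both $E$ and $F$ are bounded (the $w$-topology and Euclidean topology coincide on bounded sets, as $w$ is continuous and vanishes only at $0$). Hence, after a harmless rescaling that only modifies the bi-$w$-Lipschitz constant, I may assume $\|x-y\|\le 1$ for all $x,y\in E$ and all $x,y\in F$, which brings us into the regime where Proposition \ref{prop-norms}(i) applies. Next, given $0<\eta<1$, I would choose $\epsilon>0$ so small that
\[
\frac{\ln(\lambda-\epsilon)}{\ln(\lambda+\epsilon)}\ge \eta,\qquad \text{equivalently}\qquad \frac{\ln(\lambda+\epsilon)}{\ln(\lambda-\epsilon)}\le \frac{1}{\eta},
\]
which is possible since both ratios tend to $1$ as $\epsilon\to 0^+$. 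With this $\epsilon$ fixed, the constant $C_\epsilon$ in Proposition \ref{prop-norms}(i) is fixed as well.

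For the upper bound, I would chain the inequalities
\[
C_\epsilon^{-1}\|\sigma(x)-\sigma(y)\|^{\frac{\ln q}{d\ln(\lambda-\epsilon)}}\le w(\sigma(x)-\sigma(y))\le C\,w(x-y)\le CC_\epsilon\,\|x-y\|^{\frac{\ln q}{d\ln(\lambda+\epsilon)}},
\]
raise to the power $d\ln(\lambda-\epsilon)/\ln q$, and then use $\|x-y\|\le 1$ together with the exponent inequality above to absorb $\ln(\lambda-\epsilon)/\ln(\lambda+\epsilon)$ into $\eta$. For the lower bound, I would symmetrically chain
\[
C_\epsilon^{-1}\|x-y\|^{\frac{\ln q}{d\ln(\lambda-\epsilon)}}\le w(x-y)\le C\,w(\sigma(x)-\sigma(y))\le CC_\epsilon\,\|\sigma(x)-\sigma(y)\|^{\frac{\ln q}{d\ln(\lambda+\epsilon)}},
\]
raise to the power $d\ln(\lambda+\epsilon)/\ln q$, and again use $\|x-y\|\le 1$ and the exponent inequality to replace $\ln(\lambda+\epsilon)/\ln(\lambda-\epsilon)$ by $1/\eta$. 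Both bounds together produce a constant $C'=C'(\eta)$ such that
\[
(C')^{-1}\|x-y\|^{1/\eta}\le \|\sigma(x)-\sigma(y)\|\le C'\|x-y\|^\eta,\qquad \forall\,x,y\in E,
\]
so that $E\simeq_n F$.

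There is no real obstacle here, only careful bookkeeping. The only place where the hypothesis $\lambda_0=\lambda_1$ is essential is in the ability to send both exponents of Proposition \ref{prop-norms}(i) to the common value $\ln q/(d\ln\lambda)$ by a single choice of $\epsilon$; if $\lambda_0<\lambda_1$ the ratio $\ln(\lambda_0-\epsilon)/\ln(\lambda_1+\epsilon)$ is bounded strictly below $1$ and the argument would yield only a H\"older-type bound whose exponent cannot be pushed arbitrarily close to $1$, which is exactly why Proposition \ref{prop} (and Corollary \ref{cor-iff}) needs this equal-moduli assumption.
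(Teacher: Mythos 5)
Your proposal is correct and follows essentially the same route as the paper: rescale so that diameters are at most $1$, pick $\epsilon$ so that $\ln(\lambda-\epsilon)/\ln(\lambda+\epsilon)\ge\eta$ (the paper takes equality via the bijection $h(x)=\ln(\lambda-x)/\ln(\lambda+x)$), and sandwich the bi-$w$-Lipschitz estimate between the two sides of Proposition \ref{prop-norms}(i). The only difference is cosmetic: you write out the lower (reverse) H\"older bound explicitly, which the paper dismisses with ``the reverse inequality also follows immediately.''
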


\begin{proof}
	Let  $\lambda=\lambda_0=\lambda_1$ and define a function $h: (0,\lambda-1) \to (0,1)$ by $$h(x)=\frac{\ln(\lambda-x)}{\ln(\lambda +x)}.$$ Obviously $h$ is a  bijection. Hence for any $0<\eta< 1$, we can choose $\epsilon\in (0,\lambda -1)$ such that $\eta=h(\epsilon)$.
	
By taking the bijective map $g(x)=x/|E|$ where $|E|$ is the diameter of $E$ under the Euclidean norm, we have  $E\simeq_n E/|E|$. Similarly   $F\simeq_n F/|F|$. Without loss of generality, we may assume $|E|, |F|\le 1$. Since  $E\simeq_w F$, there  is a bijection $\sigma: E\to F$ satisfying the inequality 
$$C_0^{-1}w(x-y)\leq w(\sigma(x)-\sigma(y))\leq C_0w(x-y),$$ where $C_0$ is a constant.  That together with Proposition \ref{prop-norms} implies
	\begin{eqnarray*}
		\|\sigma(x)-\sigma(y)\|& \le & (C_\epsilon w(\sigma(x)-\sigma(y))^{\frac{d\ln(\lambda-\epsilon)}{\ln q}} \\
		& \le & (C_\epsilon C_0w(x-y))^{\frac{d\ln(\lambda-\epsilon)}{\ln q}} \\
		& \le & \left(C_\epsilon^2 C_0\|x-y\|^{\frac{\ln q}{d \ln (\lambda+\epsilon)}}\right)^{\frac{d\ln(\lambda-\epsilon)}{\ln q}} \\
		& \le & (C_\epsilon^2 C_0)^{\frac{d\ln(\lambda-\epsilon)}{\ln q}}\|x-y\|^{h(\epsilon)}.
	\end{eqnarray*}
The reverse inequality also follows immediately. By letting $\eta=h(\epsilon)$ as the previous argument, we prove that $E\simeq_n F$.
\end{proof}

From now on, we focus on the  IFS $\{S_i\}_{i=1}^N$ in \eqref{id-self-affine-set} and  fix the invariant set $J=\overline{B_\delta}$.  Let $J_k=\bigcup_{\bu\in \Sigma^k}S_{\bu}(J)$ be the $k$-th iteration of $J$ under the IFS, where $S_\bu(J)=A^{-k}(J+d_\bu)$. Obviously the self-affine set $K=\bigcap_{k=1}^\infty J_k$.  Denote by
$$\mathbb{H}_k=J_k+\Z \quad \text{and}\quad\mathbb{H}=K+\Z.$$ Then ${\mathbb H}=\bigcap_{k=1}^\infty {\mathbb H}_k$. 

The following two topological lemmas are straightforward, which were also concerned by Xi and Xiong (\cite{XiXi14}).

\begin{Lem}\label{lem.union.totally dis.}
	The union of finitely many totally disconnected compact subsets of ${\mathbb R}^d$   is also totally disconnected.
\end{Lem}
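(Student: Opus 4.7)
My plan is to prove the lemma by showing that in the union $E = E_1 \cup \cdots \cup E_n$, any two distinct points can be separated by a clopen subset of $E$; since $E$ is a compact metric space, this is equivalent to total disconnectedness (quasi-components coincide with connected components in the compact Hausdorff setting). To this end, I will invoke the standard characterization of compact totally disconnected metric spaces: each $E_i$ is zero-dimensional, so for every $\varepsilon>0$ it admits a finite partition into clopen (in $E_i$) subsets each of diameter less than $\varepsilon$.

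Fix distinct $x,y\in E$ and set $\varepsilon=\tfrac{1}{3}\|x-y\|$. For each index $i$, partition $E_i$ as $E_i=V_i^{(1)}\sqcup\cdots\sqcup V_i^{(m_i)}$ into clopen pieces in $E_i$ of diameter less than $\varepsilon$. Let
\[
W=\bigcup\bigl\{V_i^{(j)}:V_i^{(j)}\cap B(x,\varepsilon)\neq\emptyset\bigr\}.
\]
By construction $x\in W$, while the small-diameter property forces $W\subset B(x,2\varepsilon)$, so $y\notin W$. The key step is to verify that $W$ is clopen in $E$: since each $E_i$ is closed in $E$ and each $V_i^{(j)}$ is closed in $E_i$, both $W$ and its complement $E\setminus W=\bigcup_i\bigcup_{(i,j)\notin I} V_i^{(j)}$ are finite unions of sets closed in $E$, hence closed in $E$. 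Thus $W$ is clopen in $E$ and separates $x$ from $y$.

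Since $x,y$ were arbitrary distinct points, $E$ has a clopen basis, which yields total disconnectedness. I expect the only subtle point to be the passage from ``clopen in each $E_i$'' to ``clopen in $E$''; the argument succeeds because each $E_i$ is itself closed in the ambient compact set $E$, so closed-in-$E_i$ implies closed-in-$E$, and this is the reason we needed the individual summands to be compact rather than merely totally disconnected.
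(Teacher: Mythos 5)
Your overall strategy---separate two points of $E=E_1\cup\cdots\cup E_n$ by a set that is clopen in $E$, obtained by pasting together clopen pieces of the individual $E_i$---is the same pasting idea as in the paper, which forms a union $B_1\cup B_2$ of clopen subsets of $A_1$ and $A_2$ and asserts it is clopen in $A_1\cup A_2$; in both arguments closedness is immediate (each $E_i$ is compact, hence closed in $E$, so a finite union of sets closed in the $E_i$ is closed in $E$), and the crux is openness. Your justification of openness rests on the identity $E\setminus W=\bigcup_{(i,j)\notin I}V_i^{(j)}$, and this identity is false precisely in the case that matters, namely when the $E_i$ overlap: a point $p\in E_1\cap E_2$ may lie in a selected piece $V_1^{(j)}$ of $E_1$ (so $p\in W$) and simultaneously in a non-selected piece $V_2^{(j')}$ of $E_2$, so $p$ belongs to the right-hand side but not to $E\setminus W$. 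Only the inclusion $E\setminus W\subset\bigcup_{(i,j)\notin I}V_i^{(j)}$ holds, and $\bigl(\bigcup_{(i,j)\notin I}V_i^{(j)}\bigr)\setminus W$ need not be closed.

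This is not just a missing justification; the set $W$ you build can genuinely fail to be open. Take $d=1$, $\varepsilon=1$, $x=0$, $y=3$, $E_1=\{0,0.5,1.2,3\}$ partitioned into the clopen pieces $\{0\}$, $\{0.5,1.2\}$ (diameter $0.7<\varepsilon$) and $\{3\}$, and $E_2=\{1.2\}\cup\{1.2+1/k:\ k\ge 10\}$ taken as a single clopen piece of diameter $0.1$. The piece $\{0.5,1.2\}$ meets $B(0,1)$ and is selected, so $1.2\in W$, while the piece $E_2$ does not meet $B(0,1)$ and is not selected, so $1.2+1/k\notin W$ for every $k$; since $1.2+1/k\to 1.2$, $W$ is not open in $E$. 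The selection rule cannot be rescued by iterating it (repeatedly adjoining every piece that meets the current union), because chains of overlapping small pieces alternating between the $E_i$ can propagate arbitrarily far before stabilizing, destroying the diameter bound that keeps $y$ out. So a genuinely new idea is needed at the overlap points---for instance the closed sum theorem for zero-dimensional separable metric spaces, or a Baire-category plus boundary-bumping argument showing that a nondegenerate continuum in $E_1\cup\cdots\cup E_n$ would contain a nondegenerate continuum inside some single $E_i$. As written, the proof does not go through.
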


\begin{proof}
	Let $A_1, A_2$ be totally disconnected compact subsets of ${\mathbb R}^d$, and let $A=A_1\cup A_2$.
	Obviously, if $A_1\cap A_2=\emptyset$ then $A$ is totally disconnected. Otherwise, we need to show that for any $x\in A$ and any open neighborhood $U$ of $x$ there exists an open-closed set $B$ such that $x\in B \subset U$. Let $x\in A_1\cap A_2$ and $U$ is an open neighborhood of $x$ in $A$. Then $U\cap A_i$ is open in $A_i$ for $i=1,2$. Hence there exists an open-closed set $B_i$ in $A_i$ such that $x\in B_i\subset{U\cap A_i}$ where $i=1,2$. It follows that $x\in {B_1\cup B_2} \subset (U\cap A_1) \cup (U\cap A_2)=U$. Since $A_1, A_2$ are compact subsets of $A$, we have $B_1\cup B_2$ is closed in $A$. On the other hand, $U\cap A_i$ is also open in $U$ for $i=1,2$. Then $B_1, B_2$ are open in $U$ and $B_1\cup B_2$ is open in $U$ as well, hence open in $A$. This proves that $B_1\cup B_2$ is open-closed in $A$. The general case follows by induction.
\end{proof}

\begin{Lem}\label{lem-constant}
If the integral self-affine set $K$ is totally disconnected, then there exists $n_0\ge 1$ such that   any component of $\mathbb{H}_{n_0}$  that intersects $\overline{B_\delta}$ must lie in $B_{\delta+1}.$ 
\end{Lem}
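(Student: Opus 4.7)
The plan is to argue by contradiction, exploiting total disconnectedness of $\mathbb{H}$ near $\overline{B_\delta}$ to produce a fine open cover and then transferring this cover to $\mathbb{H}_{n_0}$ through compactness. Observe first that the sets $\mathbb{H}_k$ are decreasing in $k$ (since $S_i(J)\subset J$ forces $J_{k+1}\subset J_k$) and that $\bigcap_k \mathbb{H}_k = \mathbb{H}$.

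I would first verify that $\mathbb{H}\cap \overline{B_{\delta+2}}$ is compact and totally disconnected. Since $K\subset J=\overline{B_\delta}$, only finitely many translates $K+z$ with $z\in {\mathbb Z}^d$ can meet $\overline{B_{\delta+2}}$, and each is compact and totally disconnected; Lemma \ref{lem.union.totally dis.} then yields the claim. Next, exploiting total disconnectedness, I would partition $\mathbb{H}\cap \overline{B_{\delta+2}}$ into finitely many pairwise disjoint clopen pieces of diameter $<1/2$ and thicken each of them to a small open $\epsilon$-neighborhood $W_i\subset {\mathbb R}^d$, with $\epsilon>0$ chosen small enough that the $\overline{W_i}$ remain pairwise disjoint and each has diameter $<1$. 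A standard compactness argument applied to the decreasing compact sets $\mathbb{H}_k\cap \overline{B_{\delta+2}}$, whose intersection lies in the open set $W:=\bigcup_i W_i$, then yields some $n_0$ with $\mathbb{H}_{n_0}\cap \overline{B_{\delta+2}}\subset W$.

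I claim this $n_0$ works. Suppose a component $C$ of $\mathbb{H}_{n_0}$ meets $\overline{B_\delta}$ at some $x_0$ but also contains a point $y_0\notin B_{\delta+1}$. Let $L$ be the component of $C\cap \overline{B_{\delta+1}}$ containing $x_0$. The key geometric step is to show $L$ meets the sphere $\partial B_{\delta+1}$, so that $\mathrm{diam}(L)\geq 1$. But $L\subset \mathbb{H}_{n_0}\cap \overline{B_{\delta+2}}\subset \bigcup_i W_i$ is connected and the $W_i$ are pairwise disjoint, so $L\subset W_{i_0}$ for some $i_0$, forcing $\mathrm{diam}(L)<1$, a contradiction.

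The only delicate point is this claim that $L\cap \partial B_{\delta+1}\neq\emptyset$. Since $\mathbb{H}_{n_0}$ is ${\mathbb Z}^d$-periodic and hence unbounded, $C$ itself may be unbounded and $C\cap \overline{B_{\delta+1}}$ need not be connected, so an intermediate-value argument on $C$ alone does not suffice. I plan to use the standard fact that in a compact Hausdorff space every component is the intersection of its clopen neighborhoods: were $L$ disjoint from $\partial B_{\delta+1}$, I could separate $L$ from the closed set $C\cap \partial B_{\delta+1}$ by a clopen subset $U\subset C\cap \overline{B_{\delta+1}}$ avoiding $\partial B_{\delta+1}$. Then $U\subset B_{\delta+1}$, and writing $U=C\cap O$ for an open $O\subset B_{\delta+1}$ shows $U$ is also open in $C$; together with its closedness in $C$, $U$ is clopen, nonempty (contains $x_0$), and proper ($y_0\notin U$), contradicting the connectedness of $C$.
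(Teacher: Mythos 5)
Your proof is correct, but it reaches the conclusion by a genuinely different mechanism than the paper. The paper argues by contradiction over all levels: it assumes a bad component $C_n$ of $\mathbb{H}_n$ exists for every $n$, extracts a component $\Gamma_n$ of $C_n\cap\overline{B_{\delta+1}}$ joining $\overline{B_\delta}$ to the sphere $D=\partial B_{\delta+1}$, and then takes a Hausdorff-metric limit $\Gamma_n\to\Gamma$ (Blaschke selection), checking that the limit is a nondegenerate continuum inside $\mathbb{H}\cap\overline{B_{\delta+1}}$, which contradicts total disconnectedness via Lemma \ref{lem.union.totally dis.}. You instead use the zero-dimensionality of the compact totally disconnected set $\mathbb{H}\cap\overline{B_{\delta+2}}$ to produce a finite pairwise disjoint open cover $\{W_i\}$ by sets of diameter $<1$, push this cover up to a single finite level $n_0$ by the finite intersection property applied to the decreasing compacts $\mathbb{H}_k\cap\overline{B_{\delta+2}}$, and derive the contradiction at level $n_0$ directly: the connected set $L$ must lie in one $W_{i_0}$ yet have diameter $\ge 1$. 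The two arguments share their key topological step --- that the component $L$ of $C\cap\overline{B_{\delta+1}}$ through $x_0$ must reach the sphere, proved in both cases by the ``component equals intersection of clopen neighborhoods'' separation argument in a compact space (and your handling of this step is actually more careful than the paper's, since you note explicitly why $C\cap\overline{B_{\delta+1}}$ itself need not be connected and why the resulting clopen set $U$ is clopen in $C$). What your route buys is the avoidance of the Hausdorff-limit machinery (connectedness of the limit continuum, convergence of $\mathbb{H}_n\cap\overline{B_{\delta+1}}$ to $\mathbb{H}\cap\overline{B_{\delta+1}}$); what it costs is the appeal to the classical fact that a compact totally disconnected metric space admits arbitrarily fine finite partitions into clopen sets, which is of essentially the same depth as the separation fact already being used, so the trade is reasonable.
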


\begin{proof}
For each $n$, let $C_n$ be a component of ${\mathbb H}_n$ that intersects  $\overline{B_\delta}$. Suppose $C_n\cap B_{\delta+1}^c\ne\emptyset$. We shall obtain a contraction. Let
 $$U_n=C_n\cap\overline{B_{\delta+1}}\quad \text{and}\quad V_n=C_n\cap B_{\delta+1}^c.$$
 
Let $\Gamma_n$  be a component of $U_n$ that intersects  $\overline{B_\delta}$. We first show that $\Gamma_n$ also intersects the circle $D=\{x\in{\mathbb R}^d: \|x\|=\delta+1\}$. If not,  for any $x\in U_n$ with $\|x\|=\delta+1$, there exist two disjoint closed sets $E_x, F_x$ so that $U_n=E_x\cup F_x$ and $x\in F_x, \Gamma_n\subset E_x$. By compactness, there is a finite subcover $\{F_{x_1}, \dots, F_{x_k}\}$ of $U_n\cap D$. Let $$F=\bigcup_{i=1}^k F_{x_i}, \quad E=\bigcap_{i=1}^kE_{x_i}.$$ Then $U_n=E\cup F$ with disjoint union. Hence $E$ and $F\cup V_n$ form a separation of $C_n$, contradicting the assumption of connectedness of $C_n$.

Under the Hausdorff metric $d_H$, we know that there is a convergent subsequence of $\{\Gamma_n\}_n$. Without loss of generality, we may assume $\Gamma_n \to \Gamma$. Then $\Gamma$ is a connected closed set that intersects both $\overline{B_\delta}$ and $D$. Indeed,  if $\Gamma$ is not connected, then there is a separation $\Gamma=A\cup B$ where $A, B$ are nonempty closed sets and thus are compact, and 
$$\epsilon:=\inf\{\|a-b\|: a\in A, \ b\in B\}>0.$$ Let $\Gamma_n$ be a component such that $d_H(\Gamma_n, \Gamma)< \epsilon/3.$ Then $\Gamma_n$ is contained in an $\epsilon/3$-neighborhood of $A$ and $B$, and $\Gamma_n$ cannot be connected. That is ridiculous.

Since $\Gamma_n\subset {\mathbb H}_n\cap \overline{B_{\delta+1}}$ and $ {\mathbb H}_n\cap \overline{B_{\delta+1}} \to  {\mathbb H}\cap \overline{B_{\delta+1}}$ under the metric $d_H$. It follows that $\Gamma\subset  {\mathbb H}\cap \overline{B_{\delta+1}}$. This contradicts the fact that ${\mathbb H}\cap \overline{B_{\delta+1}}$ is totally disconnected by Lemma \ref{lem.union.totally dis.}.
\end{proof}

If a hyperbolic graph  $(X,\E)$ induced by an IFS is of bounded degree, then Theorem 5.5 of \cite{L17} shows that {\it $\partial X$ (or the fractal $K$) is totally disconnected if and only if the sizes of horizontal components in $(X,\E)$ are uniformly bounded.}   Under the present  setting, the statement can be strengthened as the following version. 

\begin{Lem}\label{th-simple}
The integral self-affine set $K$ is totally disconnected if and only if the graph $(X,\E)$ is simple.
\end{Lem}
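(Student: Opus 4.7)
The forward direction (simplicity $\Rightarrow$ total disconnectedness) is immediate. Simplicity forces $\#T\le M$ uniformly, since equivalent horizontal components have the same cardinality; the result of \cite{L17} quoted just above then converts this bound into total disconnectedness of $K$.

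For the converse, assume $K$ is totally disconnected. The same cited result furnishes $M$ with $\#T\le M$ for every $T\in\F$. The overlap condition in the definition of $\E_h$, together with $J=\overline{B_\delta}$, forces $\|d_\bu-d_\bv\|\le 2\delta$ whenever $(\bu,\bv)\in\E_h$; combined with the $\E_h$-connectivity of $T$ and $\#T\le M$, this yields $\hbox{diam}(P_T)\le 2M\delta$ for the integer digit set $P_T:=\{d_\bu:\bu\in T\}\subset{\mathbb Z}^d$. Normalising $P_T$ by subtracting a distinguished base point places it inside ${\mathbb Z}^d\cap\overline{B(0,2M\delta)}$ as a subset of cardinality $\le M$; only finitely many such normalised patterns (``shapes'') therefore appear across $\F$.

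The remaining step is to promote \emph{finitely many shapes} to \emph{finitely many} $\sim$\emph{-equivalence classes}. Proposition 3.2 makes the ``class graph'' (nodes $=[T]\in\F/\sim$; edges $[T]\to[T']$ for each offspring horizontal component $T'$ of $T$) well-defined and of uniformly bounded out-degree. I would then invoke Lemma~\ref{lem-constant}: fix the integer $n_0$ it provides, so that for $n\ge n_0$ every connected component of $J_n$ sits inside a prescribed component of $\mathbb{H}_{n_0}$. Combined with the offspring recursion $d_{\bu i}=d_i+Ad_\bu$ (which depends only on the shape of $P_T$) and the finite pool of shapes, this should force the class graph to visit only finitely many states, yielding simplicity.

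\noindent\textbf{Main obstacle.} The delicate step is the last one. The relation $\sim$ is a priori strictly finer than shape-equivalence, because it requires the base-point translations to lie in the sparser sublattice $A^{\min(|T|,|T'|)}{\mathbb Z}^d$, which shrinks as the levels grow; naively different levels could therefore spawn fresh equivalence classes indefinitely. The role of Lemma~\ref{lem-constant} is to rule this out by imposing a uniform bound on the depth at which genuinely new configurations of $J_T$ can arise, confining the descendant dynamics to a finite state system. Making this stabilisation precise---identifying the correct finite invariant of $[T]$ and verifying its compatibility with both Proposition 3.2 and the geometric bound from Lemma~\ref{lem-constant}---is the core technical content of the proof.
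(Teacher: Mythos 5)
Your forward direction coincides with the paper's. In the converse, your route to ``finitely many digit configurations up to translation'' differs from the paper's only mildly: you chain the overlap condition $\|d_\bu-d_\bv\|\le 2\delta$ along horizontal edges of $T$, using the bound $\#T\le M$ supplied by the quoted result of \cite{L17}, whereas the paper rescales: writing $\bu_j=\bu_j^1\bu_j^2$ with $|\bu_j^2|=n_0$, it notes $A^{n-n_0}J_T=\bigcup_j\bigl(S_{\bu_j^2}(J)+d_{\bu_j^1}\bigr)\subset\mathbb{H}_{n_0}$, picks $\bd\in\Z$ with $(A^{n-n_0}J_T-\bd)\cap\overline{B_\delta}\ne\emptyset$, and invokes Lemma~\ref{lem-constant} to get $\bigcup_j(J+d_{\bu_j})-A^{n_0}\bd\subset A^{n_0}B_{\delta+1}$; the uniform bound on $\#T$ then comes for free from counting lattice points in $A^{n_0}B_{\delta+1}$, so \cite{L17} is not needed for that step. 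Both routes land at the same intermediate statement.

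The genuine gap is that you stop there. The final step you defer --- promoting finitely many translation-classes of configurations to finitely many $\sim$-classes via a stabilising ``class graph'' --- is never carried out, and the sublattice obstruction you flag is real: the definition of component equivalence in Section~3 forces the digit sets of $T\subset X_n$ and $T'\subset X_m$ to differ by a vector in $A^m\Z$, not merely in $\Z$ (compute $g\circ S_\bu=S_{\bu'}$ to get $d_{\bu'}=d_\bu+A^m\bd$). Be aware that the paper does not perform your stabilisation either: it simply declares that finitely many connected configurations $\bigcup_j(J+d_{\bu_j})$ in $A^{n_0}B_{\delta+1}$, up to translation, give $\#(\F/\sim)<\infty$ ``by definition'', i.e.\ it makes exactly the identification you rightly hesitate over. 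The clean way to close the argument is to bypass $\sim$ altogether: what Proposition~3.2 and Theorem~\ref{th1.1} actually consume is the isomorphism type of the descendant subgraph rooted at $T$, and since $S_{\bu_j\bi}(J)\cap S_{\bu_k\bj}(J)\ne\emptyset$ is governed by $d_\bi-d_\bj+A^{|\bi|}(d_{\bu_j}-d_{\bu_k})$, that type depends on the configuration $\{d_{\bu_j}\}$ only up to arbitrary $\Z$-translation. Adding this observation turns your ``finitely many shapes'' into simplicity in the weaker, graphical sense of \cite{DLL15}, which is all the downstream results require; without it, neither your argument nor a literal reading of the paper's is complete.
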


\begin{proof}
Theorem \ref{th-holderequivalence} says that $(X,\E)$ is a hyperbolic graph with bounded degree. If it is simple, then there are only finitely many equivalence classes of horizontal components, hence $K$ is totally disconnected.

Conversely suppose $K$ is totally disconnected. Let $n_0$ be a constant in Lemma \ref{lem-constant}. Obviously there are finite equivalence classes of horizontal components in $\bigcup_{j=1}^{n_0} X_j$.  Let $T\subset \bigcup_{j>{n_0}} X_j$ be a horizontal component. We may assume $T=\{\bu_1,\dots, \bu_k\}\subset X_n$ for $n>n_0$. Then $J_T:=\bigcup_{j=1}^kS_{\bu_j}(J)$ is connected. Decompose each word $\bu_j$ by $\bu_j=\bu^1_j\bu^2_j$  where $\bu^1_j\in X_{n-n_0}$ and $\bu^2_j\in X_{n_0}$. We can write $S_{\bu_j}=S_{\bu^1_j}\circ S_{\bu^2_j}$. Hence
\begin{eqnarray*}
A^{n-n_0}J_T &=& \bigcup_{j=1}^k A^{n-n_0}S_{\bu_j}(J) \\
&=&\bigcup_{i=1}^k A^{n-n_0}S_{\bu^1_j}\circ S_{\bu^2_j}(J) \\
&=&\bigcup_{i=1}^k (S_{\bu^2_j}(J)+d_{\bu^1_j})\\
&\subset& \mathbb{H}_{n_0}.
\end{eqnarray*}
Choose  $\bd\in \Z$ such that $(A^{n-n_0}J_T-\bd)\cap \overline{B_\delta}\ne\emptyset.$ 
As $A^{n-n_0}J_T-\bd\subset \mathbb{H}_{n_0}$, by Lemma \ref{lem-constant}, we have $A^{n-n_0}J_T-\bd\subset B_{\delta+1}.$ 
	Hence $$A^{n}J_T-A^{n_0}\bd=\bigcup_{j=1}^k(J+d_{\bu_j})-A^{n_0}\bd\subset A^{n_0}B_{\delta+1}.$$
	
Since $k$ is uniformly bounded and $d_{\bu_j}-A^{n_0}\bd\in \Z$, there are finitely many connected sets $\bigcup_{j=1}^k(J+d_{\bu_j})$ in $A^{n_0}B_{\delta+1}$, up to translation. Therefore, the equivalence classes of horizontal components of $X$ is finite by definition.
\end{proof}

\begin{theorem}\label{th-ifandonlyif}
	Let $K=A^{-1}(K+{\mathcal D}_1)$ and $K'=A^{-1}(K'+{\mathcal D}_2)$ be two  integral self-affine sets. Suppose both the IFSs satisfy the OSC and $K, K'$ are totally disconnected. Then $K\simeq_w K'$ if and only if $\#{\mathcal D}_1=\#{\mathcal D}_2$.
\end{theorem}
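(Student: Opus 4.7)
The proof will split into necessity and sufficiency. Necessity is an immediate consequence of the dimension machinery already in place: if $K\simeq_w K'$, then by Proposition \ref{prop-diminvariant} we have $\dim_H^w K=\dim_H^w K'$, and because both IFSs satisfy the OSC, Theorem \ref{th-dimformular} yields $\dim_H^w K=d\log(\#\mathcal{D}_1)/\log q$ and $\dim_H^w K'=d\log(\#\mathcal{D}_2)/\log q$, forcing $\#\mathcal{D}_1=\#\mathcal{D}_2$.

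For sufficiency, assume $\#\mathcal{D}_1=\#\mathcal{D}_2=N$. The plan is to first identify $K$ and $K'$ with their symbolic boundaries and then to transport a bi-Lipschitz equivalence between the boundaries back to a $w$-Lipschitz equivalence on the fractals. Because both IFSs satisfy the OSC, the equivalence relations on $\Sigma^*$ are trivial, so $(X,\mathcal{E}_v)$ and $(X',\mathcal{E}_v)$ are both $N$-ary trees and $(X,\mathcal{E})$, $(X',\mathcal{E}')$ are both $N$-ary augmented trees in the sense of Definition \ref{Def'}. Since $K$ and $K'$ are totally disconnected, Lemma \ref{th-simple} makes both augmented trees simple. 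Theorem \ref{th1.1} then yields $\partial(X,\mathcal{E})\simeq\partial(X,\mathcal{E}_v)\simeq\partial(X',\mathcal{E}_v)\simeq\partial(X',\mathcal{E}')$, with each step a bi-Lipschitz equivalence of the respective visual metrics. Choosing a single visual parameter $a$ small enough to serve both graphs simultaneously (possible because each graph admits a visual metric on an interval $(0,a_0)$), one extracts a bi-Lipschitz bijection $h\colon(\partial X,\rho_a)\to(\partial X',\rho_a)$.

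To pull $h$ back to $K$ and $K'$, I would apply Theorem \ref{th-holderequivalence} to both graphs to obtain H\"older homeomorphisms $\phi\colon\partial X\to K$ and $\phi'\colon\partial X'\to K'$ satisfying $w(\phi(\xi)-\phi(\eta))\asymp\rho_a(\xi,\eta)^\alpha$, and similarly for $\phi'$, with the \emph{same} exponent $\alpha=\log q/(da)$ (same $A$, same $d$, same $a$). Defining $\sigma=\phi'\circ h\circ \phi^{-1}\colon K\to K'$ and using that raising a two-sided inequality on $\rho_a$ to the fixed positive power $\alpha$ preserves the bi-Lipschitz relation (with modified constants), the chain
\[
w(\sigma(x)-\sigma(y))\asymp \rho_a(h\phi^{-1}(x),h\phi^{-1}(y))^\alpha\asymp \rho_a(\phi^{-1}(x),\phi^{-1}(y))^\alpha\asymp w(x-y)
\]
gives the desired $w$-bi-Lipschitz bijection $\sigma$, so $K\simeq_w K'$.

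The only delicate point I anticipate is the consistent choice of the visual parameter $a$: if different parameters $a,a'$ were used for the two graphs, the H\"older exponents $\alpha,\alpha'$ would differ, and the composition $\phi'\circ h\circ\phi^{-1}$ would only be H\"older, not $w$-bi-Lipschitz. The deeper geometric content (hyperbolicity, simplicity, and the identification of both boundaries with the $N$-Cantor set) has already been packaged in Theorem \ref{th-holderequivalence}, Lemma \ref{th-simple} and Theorem \ref{th1.1}, so after fixing a common $a$ the remainder is a routine constant chase through the three bi-Lipschitz/H\"older comparisons.
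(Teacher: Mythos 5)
Your proposal is correct and follows essentially the same route as the paper: necessity via Proposition \ref{prop-diminvariant} and Theorem \ref{th-dimformular}, sufficiency by combining Lemma \ref{th-simple} and Theorem \ref{th1.1} to get a bi-Lipschitz map between the hyperbolic boundaries and then conjugating by the H\"older identifications of Theorem \ref{th-holderequivalence}. Your extra remark about fixing a common visual parameter $a$ for both graphs is a sensible precaution that the paper leaves implicit, but it does not change the argument.
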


\begin{proof}
	If $K\simeq_w K'$, by Proposition \ref{prop-diminvariant}, then  $\dim^w_H K =\dim^w_H K'$. It follows from Theorem \ref{th-dimformular} that $\#{\mathcal D}_1=\#{\mathcal D}_2$.
	
	Conversely, let $\#{\mathcal D}_1=\#{\mathcal D}_2=N$ and let $(X, \E), (Y, \E')$ be the hyperbolic graphs induced on $K, K'$ respectively. Since the OSC holds, both $(X, \E), (Y, \E')$  are $N$-ary augmented trees satisfying Definition \ref{Def'}.  From Theorem \ref{th1.1} and Lemma \ref{th-simple}, it yields that 
	$$\partial (X, \E) \simeq \partial (X, \E_v)=\partial (Y, \E_v')\simeq \partial (Y, \E').$$
	
	Let $\varphi: \partial X  \to \partial Y$ be a bi-Lipschitz map. By Theorem \ref{th-holderequivalence}, there exist two bijections $\phi_1:\partial X\to K$ and $\phi_2:\partial Y\to K'$ satisfying \eqref{eq-holder} with constants $C_1,C_2$, respectively.  Now we define $\sigma: K \to K'$ as
	$$
	\sigma = \phi_2\circ \varphi\circ \phi_1^{-1}.
	$$
	Then
	$$
	\begin {aligned}
	w(\sigma(x) -\sigma (y)) & \leq
	C_2\ \rho_a(\varphi\circ\phi_1^{-1}(x),\varphi\circ\phi_1^{-1}(y))^\alpha\\
	&\leq C_2C_0^\alpha\ \rho_a(\phi_1^{-1}(x),\phi_1^{-1}(y))^\alpha\\
	&\leq
	C_2C_0^\alpha C_1 w(x-y).
	\end{aligned}
	$$
	Let $C' = C_2C_0^\alpha C_1$, then $w(\sigma (x) - \sigma (y)) \leq C'w(x -y)$. Moreover,  ${C'}^{-1} w(x-y) \leq w(\sigma (x) - \sigma (y))$ follows from another inequality of \eqref{eq-holder}. Therefore  $K\simeq_w K'$.
\end{proof}

\begin{Cor}\label{cor-iff}
	Under the assumption of Theorem \ref{th-ifandonlyif}. If the eigenvalues of $A$ have  equal  moduli.  Then $K\simeq_n K'$ if and only if $\#{\mathcal D}_1=\#{\mathcal D}_2$.
\end{Cor}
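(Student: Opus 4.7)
The plan is to split the biconditional into two short reductions, each chaining together results already established in the paper. For the ``if'' direction, I would assume $\#{\mathcal D}_1=\#{\mathcal D}_2$, apply Theorem \ref{th-ifandonlyif} to obtain $K\simeq_w K'$, and then invoke Proposition \ref{prop} (whose hypothesis $\lambda_0=\lambda_1$ is precisely what we assume here) to pass from $w$-Lipschitz equivalence to nearly Lipschitz equivalence.

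For the ``only if'' direction, I would use the invariance of the Hausdorff dimension under nearly Lipschitz equivalence (recalled just before Proposition \ref{thm2}); this forces $\dim_H K=\dim_H K'$. Under the equal-moduli hypothesis $\lambda_0=\lambda_1=:\lambda$, the two inequalities in Proposition \ref{prop-norms}(ii) collapse to the single equality
$$\dim^w_H E=\frac{d\ln\lambda}{\ln q}\,\dim_H E,$$
valid for every $E\subset{\mathbb R}^d$, so $\dim_H K=\dim_H K'$ is equivalent to $\dim^w_H K=\dim^w_H K'$. Since both IFSs satisfy the OSC, Theorem \ref{th-dimformular} evaluates each $w$-Hausdorff dimension as $d\ln N_i/\ln q$ with $N_i=\#{\mathcal D}_i$, and the equality forces $N_1=N_2$.

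I do not anticipate any real obstacle, as both implications reduce to dimension accounting once Theorem \ref{th-ifandonlyif} and Proposition \ref{prop} are in hand. The only point that warrants a brief verification is that Hausdorff dimension is genuinely invariant under nearly Lipschitz equivalence; the standard argument — a bijection satisfying $\|\sigma(x)-\sigma(y)\|\le C\|x-y\|^\eta$ can inflate $\dim_H$ by at most a factor $1/\eta$, and letting $\eta\nearrow 1$ yields equality — takes only a few lines and is already in Falconer--Marsh \cite{FaMa92}. With this in hand, the corollary follows from the chain
$$\#{\mathcal D}_1=\#{\mathcal D}_2\;\Longleftrightarrow\;\dim^w_H K=\dim^w_H K'\;\Longleftrightarrow\;\dim_H K=\dim_H K',$$
combined with the implication $\simeq_w\Rightarrow\simeq_n$ supplied by Proposition \ref{prop}.
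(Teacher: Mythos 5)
Your proposal is correct and follows essentially the same route as the paper: the ``if'' direction via Theorem \ref{th-ifandonlyif} plus Proposition \ref{prop}, and the ``only if'' direction via invariance of $\dim_H$ under $\simeq_n$, Proposition \ref{prop-norms}(ii) with $\lambda_0=\lambda_1$, and the dimension formula of Theorem \ref{th-dimformular}. Your extra remark verifying the dimension invariance under nearly Lipschitz equivalence is a harmless addition the paper takes for granted.
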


\begin{proof}
	The if part follows from Theorem \ref{th-ifandonlyif} and Proposition \ref{prop}. For the only if part, if  $K\simeq_n K'$, then $\dim_H K= \dim_H K'$. Since the eigenvalues of $A$ have  equal  moduli, Proposition \ref{prop-norms} implies  that $\dim_H^w K= \dim_H^w K'$.  Therefore,    $\#{\mathcal D}_1=\#{\mathcal D}_2$ by Theorem \ref{th-dimformular}.
\end{proof}

If $\mathcal D\subset {\mathbb Z}^d$ is a set of coset representatives of $ {\mathbb Z}^d/{A {\mathbb Z}^d}$, i.e., $d_i+A {\mathbb Z}^d\ne d_j+A {\mathbb Z}^d$ for distinct $d_i, d_j\in {\mathcal D}$. It is well-known that the pair $(A, {\mathcal D})$ satisfies the OSC. The following corollary is immediate.

\begin{Cor}\label{cor}
	Let $K=A^{-1}(K+{\mathcal D}_1)$ and $K'=A^{-1}(K'+{\mathcal D}_2)$ be two  totally disconnected integral self-affine sets where  ${\mathcal D}_1, {\mathcal D}_2$ are sets of coset representatives of $ {\mathbb Z}^d/{A {\mathbb Z}^d}$.  Then $K\simeq_w K'$ if and only if $\#{\mathcal D}_1=\#{\mathcal D}_2$.
\end{Cor}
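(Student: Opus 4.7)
The plan is to deduce this corollary immediately from Theorem \ref{th-ifandonlyif}. The only ingredient not already packaged in that theorem is the assertion—stated parenthetically by the author just before the corollary—that $(A,\mathcal D_i)$ satisfies the OSC whenever $\mathcal D_i\subset\mathbb Z^d$ consists of distinct coset representatives of $\mathbb Z^d/A\mathbb Z^d$; once this is checked for $i=1,2$, the remaining hypotheses (integral self-affine, totally disconnected) are already granted and Theorem \ref{th-ifandonlyif} fires directly.

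For the OSC step, I would enlarge $\mathcal D_i$ to a complete set of coset representatives $\widetilde{\mathcal D}_i\supset\mathcal D_i$ with $\#\widetilde{\mathcal D}_i=q=|\det A|$, and pass to the associated self-affine tile $\widetilde K_i=A^{-1}(\widetilde K_i+\widetilde{\mathcal D}_i)$. By the classical tile theory (Bandt; Lagarias--Wang), $\widetilde K_i$ has positive Lebesgue measure and hence nonempty interior. Setting $U_i:=\mathrm{int}(\widetilde K_i)$, the tiling identity gives $\bigcup_{d\in\widetilde{\mathcal D}_i}A^{-1}(U_i+d)\subset U_i$ together with pairwise disjointness of the images. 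Both of these properties are inherited by any subcollection, and in particular by $\mathcal D_i\subset\widetilde{\mathcal D}_i$, so the same open set $U_i$ witnesses the OSC for $(A,\mathcal D_i)$.

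With the OSC verified for both pairs, Theorem \ref{th-ifandonlyif} applies verbatim and delivers $K\simeq_w K'\iff \#\mathcal D_1=\#\mathcal D_2$. There is essentially no obstacle here; the corollary is genuinely immediate. The only step that might be fleshed out is the OSC verification, and even this is standard. If one preferred to avoid invoking the tile theory, one could instead construct $U_i$ directly as a sufficiently large ball $B_R$ with $A^{-1}(B_R+d)\subset B_R$ for all $d\in\mathcal D_i$, then use the hypothesis $A^{-1}(d_j-d_k)\notin\mathbb Z^d$ for $j\ne k$ to separate the images after finitely many iterations and pass to $U_i=\bigcup_n A^{-n}(B_R+d_{\mathbf u})$ restricted appropriately; but the tile argument above is cleaner and self-contained.
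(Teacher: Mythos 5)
Your proposal matches the paper exactly: the author likewise treats the corollary as immediate from Theorem \ref{th-ifandonlyif}, citing only the ``well-known'' fact that a set of distinct coset representatives of ${\mathbb Z}^d/A{\mathbb Z}^d$ satisfies the OSC. Your tile-theoretic verification of that fact (extend to a complete residue system, take the interior of the resulting self-affine tile as the open set, and note that both the containment and the disjointness pass to subcollections) is correct and simply supplies the detail the paper leaves to the reader.
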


Let $A=\left[\begin{array}{cc}
m & 0\\
0 & n
\end{array} \right]$ be an expanding matrix where $m,n\ge 2$ are integers, and let ${\mathcal D}\subset \{0,1,\dots, m-1\}\times \{0,1,\dots, n-1\}$ be a digit set. Then the associated self-affine set $K=A^{-1}(K+\D)$ is the well-known McMullen-Bedford set (\cite{Be84},\cite{Mc84}). The standard Hausdorff (or box) dimension formula of McMullen-Bedford set has been obtained.

From Corollary \ref{cor}, it can be seen  that 	 two totally disconnected McMullen-Bedford sets are $w$-Lipschitz equivalent if and only if their digit sets have equal cardinality. However, the two $w$-Lipschitz equivalent McMullen-Bedford sets may be not Lipschitz equivalent under the Euclidean norm, as they maybe have distinct Hausdorff dimensions.

\begin{Exa}\label{exa}
{\rm 
Let $A=\left[\begin{array}{cc}
3 & 0\\
0 & 4
\end{array} \right]$ and let three digit sets be as follows
\begin{eqnarray*}
	\D_1 &=& \{(0,0),(0,1),(0,2),(1,2),(1,3),(2,0),(2,1)\}, \\
	\D_2 &=& \{(0,0),(0,1),(0,3),(1,2),(1,3),(2,0),(2,1)\}, \\ 
	\D_3 &=& \{(0,0),(0,1),(0,2),(1,2),(2,0),(2,1),(2,3)\}.
\end{eqnarray*} 
Let  $K, K', K''$ be  the associated  McMullen-Bedford sets respectively (see Figure \ref{McMullen sets}).  A dimension formula (\cite{Mc84},\cite{F}) yields that $$\dim_H K = \dim_H K'=\log_3(3^{\log_4 3}+2^{1+\log_4 3}), \  \dim_H K''=\log_3(2\cdot 3^{\log_4 3}+1)$$ which are different. Hence $K, K''$ and  $K', K''$ are both not Lipschitz equivalent under the Euclidean norm. It is also not clear that if $K\simeq K'$ even $\dim_H K = \dim_HK'$.

On the other hand, by using a criterion for integral self-affine sets to be totally disconnected (\cite{Zh17}), it can be verified that $K, K', K''$ are all totally disconnected. Therefore, $K\simeq_w K'\simeq_w K''$ by Corollary \ref{cor}.
}
\end{Exa}

\bigskip
\noindent {\bf Acknowledgements:} The author gratefully acknowledges the support of K. C. Wong Education Foundation and DAAD. He also would like to thank Prof. Martina Z\"ahle for valuable discussions.

\bigskip

\end{document}